 \newtheorem{thm}{Theorem}[section]
 \newtheorem{prop}[thm]{Proposition}
 \theoremstyle{definition}
 \theoremstyle{remark}
 \newtheorem{rem}[thm]{Remark}
 \numberwithin{equation}{section}
\begin{document}
\title[Classification of 
Lie Bialgebra Structures over Polynomials]{On the General Classification of 
Lie Bialgebra Structures over Polynomials}

\author{IULIA POP}
\author{JULIA YERMOLOVA--MAGNUSSON}
\address{Department of Mathematical Sciences, University of Gothenburg, 
Sweden. Email: iulia@chalmers.se; md1jm@chalmers.se}

\begin{abstract}
  The present paper is a continuation of \cite{PM}, where Lie bialgebra structures on $\mathfrak{g}[u]$ were studied. These structures fall into different classes labelled by the vertices of the extended Dynkin diagram of $\mathfrak{g}$. In \cite{PM} the Lie bialgebras corresponding to $-\alpha_{\rm{max}}$ were classified. In the present article, we investigate the Lie bialgebras corresponding to an arbitrary simple root $\alpha$.
 \end{abstract}
\keywords{Lie bialgebra, Lagrangian subalgebra, parabolic subalgebra, admissible triple.}
\subjclass{Primary 17B37, 17B62; Secondary 17B81.}
\maketitle

\section{Introduction}
Let $\mathfrak{g}$ denote a finite dimensional simple complex Lie algebra. 
The aim of the present article is to further investigate Lie bialgebra structures on $\mathfrak{g}[u]$ in the framework provided in \cite{SZ,PM}. 
As it was shown in \cite{SZ}, any Lie bialgebra structure $\delta$ on $\mathfrak{g}[u]$ can be extended to a Lie bialgebra structure $\bar{\delta}$ on $\mathfrak{g}[[u]]$. The classical double associated to any such structure is isomorphic to one of the following Lie algebras:
 
\textbf{Case I.} $\mathfrak{g}((u))$ together with the following nondegenerate bilinear form
\[Q_{a(u)}(f_1(u),f_2(u))=\mathrm{Res}_{u=0}(a(u)K(f_1(u),f_2(u))),\]
where $K$ is the Killing form of the Lie algebra $\mathfrak{g}((u))$ over 
$\mathbb{C}((u))$. Here $a(u)=1+\sum_{k=0}^{\infty}a_ku^k$ is a Taylor series 
which satisfies certain properties which will be presented later.

\textbf{Case II.} $\mathfrak{g}((u))\oplus \mathfrak{g}$, endowed with the following nondegenerate bilinear form:
\[ Q_{a(u)}(f_1(u)+x_1,f_2(u)+x_2)=\mathrm{Res}_{u=0}(u^{-1}a(u)K(f_1(u),f_2(u)))-K(x_1,x_2),\]
for all $f_1(u), f_2(u)\in\mathfrak{g}((u))$ and $x_1,x_2\in\mathfrak{g}$. Again $a(u)=1+\sum_{k=0}^{\infty}a_ku^k$ is a certain Taylor series.

\textbf{Case III.} $\mathfrak{g}((u))\oplus (\mathfrak{g}+\varepsilon\mathfrak{g})$, where $\varepsilon^2=0$. Here we consider \[Q_{a(u)}(f_1(u)+x_2+\varepsilon x_3,f_2(u)+y_2+\varepsilon y_3)=\mathrm{Res}_{u=0}(u^{-2}a(u)K(f_1(u),f_2(u))-\]
\[-K(x_3,y_2)-K(x_2,y_3),\]
for any $f_1(u),f_2(u)\in\mathfrak{g}((u))$ and 
$x_2,x_3,y_2,y_3\in\mathfrak{g}$, and $a(u)=1+\sum_{k=0}^{\infty}a_ku^k$ is a certain Taylor series.

The Lie bialgebra structure $\bar{\delta}$ (and implicitly $\delta$) is uniquely defined by a bounded Lagrangian subalgebra $W$ of $\mathfrak{g}((u))$, $\mathfrak{g}((u))\oplus \mathfrak{g}$ or $\mathfrak{g}((u))\oplus (\mathfrak{g}+\varepsilon\mathfrak{g})$. Moreover, $W$ should be transversal to $\mathfrak{g}[[u]]$.

The classification of bialgebra structures is reduced in this way to the classification of bounded Lagrangian subalgebras $W$ with the above property. Moreover, any such $W$ can be embedded into a special algebra, a so-called maximal order, indexed by vertices of the extended Dynkin diagram of $\mathfrak{g}$. We recall its construction, according to \cite{S}. Let $\mathfrak{h}$ be a Cartan subalgebra of $\mathfrak{g}$ with
 the corresponding set of roots $R$ and a choice of simple
 roots $\Gamma$. Denote by $\mathfrak{g}_{\alpha}$ the root space
 corresponding to a root $\alpha$. Let $\mathfrak{h}(\mathbb{R})=\{h\in\mathfrak{h}: \alpha(h)\in\mathbb{R},\alpha\in R\}$.
 Consider the valuation on $\mathbb{C}((u^{-1}))$ defined by
 $v(\sum_{k\geq n}a_{k}u^{-k})=n$. For any root $\alpha$ and any
 $h\in\mathfrak{h}(\mathbb{R})$, set
 $M_{\alpha}(h)$:=$\{f\in\mathbb{C}((u^{-1})):v(f)\geq \alpha(h)\}$.
 Consider
 \[\mathbb{O}_{h}:=\mathfrak{h}[[u^{-1}]]\oplus(\oplus_{\alpha\in R}M_{\alpha}(h)\otimes\mathfrak{g}_{\alpha}).\]
Let $\{h\in\mathfrak{h}(\mathbb{R}): \alpha(h)\geq 0, \alpha\in \Gamma, \alpha_{\max}(h)\leq 1\}$ be the standard simplex. Vertices of the above simplex correspond to vertices of the extended Dynkin diagram of
$\mathfrak{g}$, the correspondence being given by the following rule:
$0\leftrightarrow\ -\alpha_{\max}, h_{i}\leftrightarrow\alpha_{i}$, 
where $\alpha_{i}(h_{j})=\delta_{ij}/k_{j}$, and $k_{j}$ are given by the relation $\sum
k_{j}\alpha_{j}=\alpha_{\max}$. One writes $\mathbb{O}_{\alpha}$ instead of
$\mathbb{O}_{h}$ if $\alpha$ is the root which corresponds to the vertex $h$, 
and $\mathbb{O}_{-\alpha_{\max}}$ instead of $\mathbb{O}_0$.

According to \cite{SZ}, in case I, $1/a(u)$ is a polynomial of degree at most 2, in case II, is at most 1 and in case III, its degree is 0. Moreover, by means of a change of variable in $\mathbb{C}[u]$ and rescaling the nondegenerate bilinear form $Q_{a(u)}$, one may assume that $a(u)$ has one of the following forms: 
\begin{enumerate}
\item
$a(u)=1/(1-c_1u)(1-c_2u)$, for non-zero constants $c_1\neq c_2$ 

\item $a(u)=1/(1-u)^2$ 

\item $a(u)=1/1-u$ 

\item $a(u)=1$. 

\end{enumerate}

In what follows we will treat separately case I (subcases 1--4), case II (subcases 3--4 are the only possible), case III (subcase 4 is the only possible).

\section{Lie bialgebra structures on $\mathfrak{g}[u]$ in case I}

Here we consider $\mathfrak{g}((u))$ together with the form 
\[Q_{a(u)}(f_1(u),f_2(u))=\mathrm{Res}_{u=0}(a(u)K(f_1(u),f_2(u))),\]
for any $f_1(u), f_2(u)\in\mathfrak{g}((u))$. 
In \cite{SZ} the following result was proved: 
\begin{prop}
\cite{SZ} There exists a one-to-one correspondence between 
Lie bialgebra structures 
$\delta$ on $\mathfrak{g}[u]$ satisfying  
$D_{\bar{\delta}}(\mathfrak{g}[[u]])=\mathfrak{g}((u))$ and 
bounded Lagrangian subalgebras $W$ of $\mathfrak{g}((u))$, with respect to 
the nondegenerate bilinear form $Q_{a(u)}$, 
and transversal to $\mathfrak{g}[[u]]$.
\end{prop}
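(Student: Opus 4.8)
The plan is to derive the statement from Drinfeld's correspondence between Lie bialgebra structures and Manin triples, together with the structural description of the classical double recalled in the Introduction.

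Recall that a Lie bialgebra structure $\bar\delta$ on $\mathfrak g[[u]]$ is the same datum as a Manin triple $(\mathfrak D, \mathfrak g[[u]], W)$: the classical double $\mathfrak D = D_{\bar\delta}(\mathfrak g[[u]])$ carries a canonical invariant, nondegenerate, symmetric bilinear form; $\mathfrak g[[u]]$ sits inside $\mathfrak D$ as a Lagrangian subalgebra; $W$, the suitably restricted dual equipped with its induced bracket, is a complementary Lagrangian subalgebra; and the cobracket is recovered from the decomposition $\mathfrak D = \mathfrak g[[u]] \oplus W$ by dualizing the bracket of $W$ against the form. The hypothesis $D_{\bar\delta}(\mathfrak g[[u]]) = \mathfrak g((u))$ places us in Case I, so that $\mathfrak D$ is $\mathfrak g((u))$ equipped with $Q_{a(u)}$. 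Since $\mathrm{Res}_{u=0}$ kills every power series and $a(u)$ is a unit in $\mathbb C[[u]]$, the standard copy $\mathfrak g[[u]] \subset \mathfrak g((u))$ is isotropic and, by a short computation using the nondegeneracy of $K$, maximal isotropic; hence it is a Lagrangian subalgebra, and the canonical copy of $\mathfrak g[[u]]$ provided by the double must be identified with it. Therefore the datum of $\bar\delta$ is exactly the datum of the complementary Lagrangian $W \subset \mathfrak g((u))$, and ``complementary'' is precisely ``transversal to $\mathfrak g[[u]]$''.

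It then remains to match structures on $\mathfrak g[u]$ with this picture. In one direction: given $\delta$ on $\mathfrak g[u]$ with $D_{\bar\delta}(\mathfrak g[[u]]) = \mathfrak g((u))$, the construction above yields a Lagrangian $W$ transversal to $\mathfrak g[[u]]$, and one shows $W$ is bounded by tracking the polynomiality of $\delta$: because $\delta(\mathfrak g[u]) \subseteq \mathfrak g[u] \otimes \mathfrak g[u]$ has bounded degree, the dual object $W$ must lie in a lattice, i.e. $u^{N} W \subseteq \mathfrak g[[u^{-1}]]$ for some $N \geq 0$. In the other direction: from a bounded Lagrangian $W$ transversal to $\mathfrak g[[u]]$, the triple $(\mathfrak g((u)), \mathfrak g[[u]], W)$ is a Manin triple and hence defines a Lie bialgebra structure $\bar\delta$ on $\mathfrak g[[u]]$; boundedness of $W$ forces the associated cobracket to satisfy $\bar\delta(\mathfrak g[u]) \subseteq \mathfrak g[u] \otimes \mathfrak g[u]$, so it restricts to a Lie bialgebra structure $\delta$ on $\mathfrak g[u]$, and $D_{\bar\delta}(\mathfrak g[[u]]) = \mathfrak g((u))$ holds by construction. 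The two assignments $\delta \mapsto W$ and $W \mapsto \delta$ are mutually inverse, by uniqueness in the Manin-triple correspondence and the fact, recalled in the Introduction, that $\bar\delta$ and $\delta$ determine one another.

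I expect the technical core to be the equivalence between polynomiality of $\delta$ and boundedness of $W$, and, underlying it, the careful treatment of the infinite-dimensional duality: one must verify that the restricted dual of $\mathfrak g[[u]]$ with respect to the residue pairing $Q_{a(u)}$ is genuinely realized as a subspace $W$ of $\mathfrak g((u))$, rather than some abstract completion, so that the statement ``complementary Lagrangian subalgebra of $\mathfrak g((u))$'' is literally correct. The remaining steps --- that a Lagrangian subalgebra transversal to $\mathfrak g[[u]]$ encodes exactly the co-Jacobi identity and the $1$-cocycle condition, and that the bracket and form restrict correctly --- are the standard Manin-triple dictionary and are routine.
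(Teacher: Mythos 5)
First, note that the paper does not prove this proposition at all: it is quoted verbatim from \cite{SZ}, so there is no in-paper argument to compare your proposal against. Your outline --- Drinfeld's dictionary between Lie bialgebra structures and Manin triples, the identification of the double with $(\mathfrak{g}((u)),Q_{a(u)})$ under the hypothesis $D_{\bar\delta}(\mathfrak{g}[[u]])=\mathfrak{g}((u))$, and the observation that $\mathfrak{g}[[u]]$ is Lagrangian because $\mathrm{Res}_{u=0}$ annihilates $\mathbb{C}[[u]]$ and $a(u)$ is a unit --- is certainly the intended route and is consistent with how \cite{SZ} and the surrounding literature set the problem up.

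As a proof, however, your text has a genuine gap, and you point at it yourself: the equivalence between polynomiality of $\delta$ and boundedness of $W$ is asserted, not established. The difficulty is that $\delta(x)\in\mathfrak{g}[u]\otimes\mathfrak{g}[u]$ for each individual $x$ does not by itself yield a uniform degree bound: since $\delta$ is a $1$-cocycle, $\deg\delta(xu^{n})$ may a priori grow with $n$, so the claim that ``the dual object $W$ must lie in a lattice, i.e.\ $u^{N}W\subseteq\mathfrak{g}[[u^{-1}]]$ for some $N$'' does not follow from ``bounded degree'' without an argument producing the uniform $N$; extracting that uniform bound is precisely the nontrivial content of the boundedness statement in \cite{SZ}. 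A second load-bearing step that you flag but do not carry out is the realization of the restricted dual of $\mathfrak{g}[[u]]$ as an honest complementary subspace of $\mathfrak{g}((u))$ rather than of some completion; and a third is that the abstract isomorphism $D_{\bar\delta}(\mathfrak{g}[[u]])\cong\mathfrak{g}((u))$ can be chosen so as to carry $\mathfrak{g}[[u]]$ to the standard copy and the canonical form of the double to $Q_{a(u)}$ for one of the normalized choices of $a(u)$ --- this is itself the main classification theorem of \cite{SZ}, recalled in the Introduction, not a formal consequence of the Manin-triple dictionary. In sum, the proposal is a correct plan whose hard steps are all deferred to exactly the reference from which the paper imports the proposition.
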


In \cite{PM}, we treated this infinite-dimensional problem by first showing that any such $W$ can be embedded into a special algebra, a so-called maximal order, indexed by vertices of the extended Dynkin diagram of $\mathfrak{g}$. This embedding allows us to replace our infinite-dimensional problem with a finite-dimensional one.  
\begin{prop}\label{max_ord}\cite{PM}
Suppose that $W$ is a bounded Lagrangian subalgebra of $\mathfrak{g}((u))$, with respect to $Q_{a(u)}$ and transversal to $\mathfrak{g}[[u]]$. Then there exists $\sigma\in\mathrm{Aut}_{\mathbb{C}[u]}(\mathfrak{g}[u])$ such that $\sigma(u)(W)\subseteq \mathbb{O}_{\alpha}\cap \mathfrak{g}[u,u^{-1}]$, where $\alpha$ is either a simple root or $-\alpha_{\max}$. 
\end{prop}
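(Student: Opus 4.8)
The plan is to show that any bounded Lagrangian $W$ lies, up to a $\mathbb{C}[u]$-linear automorphism, inside one of the maximal orders $\mathbb{O}_\alpha$, and then intersect with the Laurent polynomials. I would work with the valuation $v$ on $\mathbb{C}((u^{-1}))$ introduced in the excerpt and the induced filtration on $\mathfrak{g}((u^{-1}))$. Since $W$ is bounded, $W\subseteq u^{-N}\mathfrak{g}[[u^{-1}]]$ for some $N$; and since $W$ is transversal to $\mathfrak{g}[[u]]$, we have $\mathfrak{g}((u))=W\oplus\mathfrak{g}[[u]]$. The first step is to extract from $W$ a "leading term" object: for each $\alpha\in R\cup\{0\}$ one records the minimal valuation occurring in the $\mathfrak{g}_\alpha$-component (resp. $\mathfrak{h}$-component) of elements of $W$, getting a function $\phi$ on the root system. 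The Lagrangian condition with respect to $Q_{a(u)}$ forces a pairing constraint $\phi(\alpha)+\phi(-\alpha)\ge$ (a constant determined by $\mathrm{Res}$ and $a(u)$), and closure of $W$ under bracket forces $\phi(\alpha)+\phi(\beta)\ge\phi(\alpha+\beta)$ whenever $\alpha+\beta\in R$. These are exactly the inequalities satisfied by $h\in\mathfrak{h}(\mathbb{R})$ via $\phi(\alpha)=\alpha(h)$.

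The second step is a convexity/affine-Weyl-group argument: the data $\phi$ above define (after normalization) a point in the affine apartment, and the simplices of the affine Coxeter complex tile that apartment, so $\phi$ lies in the closure of a chamber. By the standard action of the affine Weyl group — which on the level of $\mathfrak{g}[u,u^{-1}]$ is realized by $\mathbb{C}[u]$-linear (or $\mathbb{C}[u,u^{-1}]$-linear) automorphisms of the loop algebra, i.e. by elements $\sigma$ of the required type, possibly composed with an inner automorphism by $\exp$ of a nilpotent — one transports that chamber to the standard simplex $\{h:\alpha(h)\ge 0,\ \alpha\in\Gamma,\ \alpha_{\max}(h)\le 1\}$. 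Here one must be careful that the allowed automorphisms are only those in $\mathrm{Aut}_{\mathbb{C}[u]}(\mathfrak{g}[u])$ (extended to $\mathfrak{g}[u,u^{-1}]$), which is why the target vertex is constrained to be a simple root or $-\alpha_{\max}$ rather than an arbitrary vertex of the extended Dynkin diagram: the subgroup of the extended affine Weyl group realizable this way is precisely the affine Weyl group (translations by the root lattice together with the finite Weyl group), and its chambers correspond to exactly these vertices. After applying such $\sigma$ one gets $\sigma(W)\subseteq\mathbb{O}_\alpha$ for the vertex $\alpha$ labelling the standard simplex's relevant face.

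The third step is the intersection with $\mathfrak{g}[u,u^{-1}]$. Since $W$ is bounded and Lagrangian, it is "cofinite" in the appropriate sense: $W\cap\mathfrak{g}[[u]]$ is finite-codimensional in $W$ by transversality, and the part of $W$ with high $u^{-1}$-order is controlled. One argues that $W$ is spanned over $\mathbb{C}$ by elements of $\mathfrak{g}[u,u^{-1}]$ — concretely, $W\subseteq\mathfrak{g}[u,u^{-1}]$ can be arranged because a Lagrangian complement to $\mathfrak{g}[[u]]$ that is bounded below in $v$ must consist of Laurent polynomials (any element is congruent mod $\mathfrak{g}[[u]]$ to something in $W$, and iterating kills the tail). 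Hence $\sigma(W)\subseteq\mathbb{O}_\alpha\cap\mathfrak{g}[u,u^{-1}]$.

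The main obstacle I anticipate is the second step: correctly matching the combinatorics of the filtration function $\phi$ to the affine building and verifying that the transporting element can be chosen in $\mathrm{Aut}_{\mathbb{C}[u]}(\mathfrak{g}[u])$ rather than in the larger group of automorphisms of $\mathfrak{g}((u))$ — i.e. showing the relevant chamber is $\mathrm{Aut}_{\mathbb{C}[u]}$-equivalent to the standard one. This is where the restriction "$\alpha$ a simple root or $-\alpha_{\max}$" genuinely enters, and the details require the explicit description of $\mathrm{Aut}_{\mathbb{C}[u]}(\mathfrak{g}[u])$ together with the structure of the affine Weyl group acting on $\mathfrak{h}(\mathbb{R})$.
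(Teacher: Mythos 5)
First, a caveat: the paper does not actually prove Proposition~\ref{max_ord}; it is imported verbatim from \cite{PM} and ultimately rests on the classification of maximal orders in \cite{S}, so there is no in-paper argument to compare yours against, and I can only assess your sketch on its own terms. Your overall picture --- extract a filtration datum from $W$, locate it in the affine apartment, normalize to the standard simplex, and use boundedness to land in $\mathfrak{g}[u,u^{-1}]$ --- is the right general shape, and your third step is essentially fine (indeed it is immediate: boundedness gives $W\subseteq u^{N}\mathfrak{g}[[u^{-1}]]$ for some $N$, and intersecting with $\mathfrak{g}((u))$ already yields Laurent polynomials; the ``iterating kills the tail'' remark is both unnecessary and not by itself a proof).

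There are, however, two genuine gaps. The first is in your step 2, which rests on a misreading of the statement: the vertices of the extended Dynkin diagram are precisely the simple roots together with $-\alpha_{\max}$, so the conclusion imposes no restriction whatsoever, and the ``main obstacle'' you single out --- that only the non-extended affine Weyl group is realizable and that this excludes certain vertices --- is not the issue at all. Worse, the mechanism you propose is not available: $\mathrm{Aut}_{\mathbb{C}[u]}(\mathfrak{g}[u])$ is essentially $G(\mathbb{C}[u])$ (up to diagram automorphisms) and contains no lattice translations $u^{\lambda}$, since these do not preserve $\mathfrak{g}[u]$; hence one cannot ``transport the chamber to the standard simplex'' by affine Weyl group elements within the allowed group. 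What the proof must actually invoke is the theorem of \cite{S}: every bounded order is contained in a maximal one, and the $G(\mathbb{C}[u])$-conjugacy classes of maximal orders are indexed by the vertices of the standard simplex, i.e.\ by the vertices of the extended Dynkin diagram. That classification is the real content of the proposition, and your sketch neither proves it nor correctly reduces to it. The second gap is in step 1: the function $\phi$ you extract satisfies the subadditivity and pairing inequalities, but nothing you say shows that $\phi$ is linear, i.e.\ of the form $\phi(\alpha)=\alpha(h)$ for some $h\in\mathfrak{h}(\mathbb{R})$ lying in (the closure of) the standard simplex; without that, $W$ is not contained in any $\mathbb{O}_{h}$, and the whole reduction has no starting point.
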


In \cite{PM} the Lie bialgebras corresponding to $-\alpha_{\rm{max}}$ were classified. In the present article, we investigate the Lie bialgebras corresponding to an arbitrary simple root $\alpha$.

Consider the Lie algebra $\mathfrak{g}\oplus\mathfrak{g}$, together with
the nondegenerate bilinear form 
\[\bar{Q}((x_1,y_1),(x_2,y_2))=K(x_1,x_2)-K(y_1,y_2),\] 
for any elements $x_1$, $y_1$, $x_2$, $y_2\in \mathfrak{g}$. Let us fix an arbitrary simple root $\alpha$. Denote by $P_{\alpha}^{-}$ the standard parabolic subalgebra of $\mathfrak{g}$ spanned by the root spaces of all negative roots and all positive roots which do not contain $\alpha$. Let us denote by  $\Delta_{\alpha}$ the set of pairs $(x,y)\in P_{\alpha}^{-}\times P_{\alpha}^{-}$ which have the same reductive parts. 
Then the following result holds: 

\begin{thm}\label{A1}
Let $\alpha$ be a simple root and $k$ its coefficient in the decomposition of 
$\alpha_{\rm{max}}$. Let $a(u)=1/(1-c_1u)(1-c_2u)$, for non-zero constants $c_1\neq c_2$. 

(i) If $k=1$, there exists a one-to-one correspondence between Lagrangian subalgebras $W$ of $\mathfrak{g}((u))$, with respect to $Q_{a(u)}$, which are transversal to $\mathfrak{g}[[u]]$ and satisfy 
$W \subseteq \mathbb{O}_{\alpha}\cap\mathfrak{g}[u,u^{-1}]$, and Lagrangian subalgebras in $\mathfrak{g}\oplus\mathfrak{g}$, with respect to $\bar{Q}$, transversal to $\Delta_{\alpha}$. 

(ii) If $k>1$ then there are no Lagrangian subalgebras $W$ of $\mathfrak{g}((u))$, with respect to $Q_{a(u)}$, which are transversal to $\mathfrak{g}[[u]]$ and satisfy $W \subseteq \mathbb{O}_{\alpha}\cap\mathfrak{g}[u,u^{-1}]$. 
\end{thm}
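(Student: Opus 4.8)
The plan is to push the whole problem forward along the two evaluation homomorphisms at the zeros $u_1:=1/c_1$ and $u_2:=1/c_2$ of $p(u):=1/a(u)=(1-c_1u)(1-c_2u)$, which are distinct and non‑zero. Set $L_\alpha:=\mathbb{O}_\alpha\cap\mathfrak{g}[u,u^{-1}]$. First I would make $L_\alpha$ explicit: writing $m_\alpha(\beta)$ for the coefficient of $\alpha=\alpha_i$ in a root $\beta$, one has $\beta(h_i)=m_\alpha(\beta)/k$, and since $v$ is integer‑valued and $|m_\alpha(\beta)|\le k$,
\[
L_\alpha=u^{-1}\mathfrak{g}[u^{-1}]\ \oplus\ P_\alpha^-\ \oplus\ u\,\mathfrak{q}_\alpha,\qquad \mathfrak{q}_\alpha:=\bigoplus_{\beta\in R^-,\ m_\alpha(\beta)=-k}\mathfrak{g}_\beta ,
\]
so that $L_\alpha\cap\mathfrak{g}[u]=P_\alpha^-\oplus u\,\mathfrak{q}_\alpha$; note that $\mathfrak{q}_\alpha$ is contained in the nilradical $\mathfrak{n}_\alpha$ of $P_\alpha^-$, with equality precisely when $k=1$. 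I would also record the elementary reduction already used in \cite{SZ,PM}: for $W\subseteq\mathfrak{g}[u,u^{-1}]$, transversality to $\mathfrak{g}[[u]]$ is equivalent to $W\oplus\mathfrak{g}[u]=\mathfrak{g}[u,u^{-1}]$, hence, for $W\subseteq L_\alpha$, to $W\oplus(L_\alpha\cap\mathfrak{g}[u])=L_\alpha$.

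Next I would analyse $Q_{a(u)}$ on $L_\alpha$. Let $\pi:=(\mathrm{ev}_{u_1},\mathrm{ev}_{u_2})\colon\mathfrak{g}[u,u^{-1}]\to\mathfrak{g}\oplus\mathfrak{g}$, a surjective Lie algebra homomorphism with $\ker\pi=p(u)\mathfrak{g}[u,u^{-1}]$; it restricts to a surjection $L_\alpha\to\mathfrak{g}\oplus\mathfrak{g}$ because $\pi$ is already onto on $u^{-1}\mathfrak{g}[u^{-1}]\subseteq L_\alpha$ (the matrix with rows $(u_i^{-1},u_i^{-2})$ being invertible). The key observation is that for $f,g\in L_\alpha$ the Laurent polynomial $K(f(u),g(u))$ has degree $\le 0$ in $u$: the only contributions that could raise the degree come from the top piece $u\,\mathfrak{q}_\alpha$, and these vanish because $\mathfrak{q}_\alpha$ is Killing‑orthogonal to $P_\alpha^-$ and to $\mathfrak{q}_\alpha$ (sums of the roots involved have $\alpha$‑coefficient $\ne 0$). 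Hence $a(u)K(f(u),g(u))$ is regular of order $\ge 2$ at $u=\infty$, and the residue theorem on $\mathbb{P}^1$ — the remaining poles being the simple poles $u_1,u_2$ of $a$ — gives
\[
Q_{a(u)}(f,g)=-\,\mathrm{Res}_{u=u_1}a\cdot K\big(f(u_1),g(u_1)\big)-\mathrm{Res}_{u=u_2}a\cdot K\big(f(u_2),g(u_2)\big).
\]
Since $\mathrm{Res}_{u=u_1}a=\tfrac1{c_2-c_1}=-\mathrm{Res}_{u=u_2}a$, the right‑hand side is $\tfrac1{c_1-c_2}\bar Q(\pi(f),\pi(g))$, i.e. $\bar Q\circ\pi$ up to the non‑zero scalar allowed by the rescaling in the normal form of $a(u)$. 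Consequently $\mathrm{rad}\big(Q_{a(u)}|_{L_\alpha}\big)=R:=\ker(\pi|_{L_\alpha})=L_\alpha\cap p(u)\mathfrak{g}[u,u^{-1}]$, and $\pi$ induces an isomorphism $L_\alpha/R\to(\mathfrak{g}\oplus\mathfrak{g},\bar Q)$ of Lie algebras with form.

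For (ii) I would argue by dimension. Suppose $k>1$ and that a $W$ as in the statement existed; then $W\oplus(L_\alpha\cap\mathfrak{g}[u])=L_\alpha$, so $\pi(W)+\pi(L_\alpha\cap\mathfrak{g}[u])=\mathfrak{g}\oplus\mathfrak{g}$. But $\pi(W)$ is isotropic for $\bar Q$ (as $W$ is isotropic for $Q_{a(u)}$ and lies in $L_\alpha$), so $\dim\pi(W)\le\dim\mathfrak{g}$; and $\pi$ is injective on $L_\alpha\cap\mathfrak{g}[u]$ (because $R\cap\mathfrak{g}[u]\subseteq p(u)\mathfrak{g}[u]$ consists of polynomials of degree $\ge 2$, while $L_\alpha\cap\mathfrak{g}[u]$ has degree $\le 1$), whence $\dim\pi(L_\alpha\cap\mathfrak{g}[u])=\dim P_\alpha^-+\dim\mathfrak{q}_\alpha<\dim P_\alpha^-+\dim\mathfrak{n}_\alpha=\dim\mathfrak{g}$, the strict inequality being exactly $\mathfrak{q}_\alpha\subsetneq\mathfrak{n}_\alpha$ for $k>1$. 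Adding, $2\dim\mathfrak{g}\le\dim\pi(W)+\dim\pi(L_\alpha\cap\mathfrak{g}[u])<2\dim\mathfrak{g}$, a contradiction.

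For (i), $k=1$ forces $\mathfrak{q}_\alpha=\mathfrak{n}_\alpha$, so $\dim\pi(L_\alpha\cap\mathfrak{g}[u])=\dim\mathfrak{g}=\dim\Delta_\alpha$; moreover $\pi(X+uN)=(X+u_1N,\,X+u_2N)$ for $X\in P_\alpha^-$, $N\in\mathfrak{n}_\alpha$ has both entries in $P_\alpha^-$ and (since $u_iN\in\mathfrak{n}_\alpha$) with the same reductive part, hence lies in $\Delta_\alpha$, and solving the two equations for $X,N$ (possible because $u_1\ne u_2$) shows every element of $\Delta_\alpha$ arises this way; thus $\pi$ maps $L_\alpha\cap\mathfrak{g}[u]$ isomorphically onto $\Delta_\alpha$. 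Then, just as in \cite{PM}, passing to $L_\alpha/R$ yields the bijection: a Lagrangian $W\subseteq L_\alpha$ of $\mathfrak{g}((u))$ contains $R=L_\alpha^\perp$ (coisotropy of $L_\alpha$, which for $k=1$ follows from $R=\mathrm{rad}(Q_{a(u)}|_{L_\alpha})$ together with reflexivity of $Q_{a(u)}$ on $L_\alpha$), so $\pi(W)=W/R$ is a Lagrangian subalgebra of $(\mathfrak{g}\oplus\mathfrak{g},\bar Q)$; the assignments $W\mapsto\pi(W)$ and $\bar W\mapsto(\pi|_{L_\alpha})^{-1}(\bar W)$ are mutually inverse, and $W\oplus(L_\alpha\cap\mathfrak{g}[u])=L_\alpha$ translates into $\pi(W)\oplus\Delta_\alpha=\mathfrak{g}\oplus\mathfrak{g}$ (directness forced by $\dim\Delta_\alpha=\dim\mathfrak{g}$ and $R\cap\mathfrak{g}[u]=0$), which is exactly transversality to $\Delta_\alpha$. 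The main obstacle is the residue–degree identity $Q_{a(u)}|_{L_\alpha}=\tfrac1{c_1-c_2}\bar Q\circ\pi$: the precise shape of $\mathbb{O}_\alpha$ — in particular the appearance of the parabolic $P_\alpha^-$ rather than all of $\mathfrak{g}$ — is exactly what forces $K(f(u),g(u))$ to have non‑positive degree on $L_\alpha$; everything afterwards is linear algebra plus the count $\dim\mathfrak{q}_\alpha=\dim\mathfrak{n}_\alpha\iff k=1$, together with the descent from ``Lagrangian in $\mathfrak{g}((u))$'' to ``Lagrangian modulo $R$'', which I would take from \cite{SZ,PM} rather than reprove.
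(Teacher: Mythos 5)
Your proposal is correct and, for part (i), follows essentially the same route as the paper: the map $\pi=(\mathrm{ev}_{1/c_1},\mathrm{ev}_{1/c_2})$ is exactly the paper's $\phi$ (written there in the variable $u^{-1}$ at the points $c_1,c_2$), with the same identification of the quotient $\bigl(\mathbb{O}_\alpha\cap\mathfrak{g}[u,u^{-1}]\bigr)/\bigl(\mathbb{O}_\alpha\cap\mathfrak{g}[u,u^{-1}]\bigr)^{\perp}$ with $(\mathfrak{g}\oplus\mathfrak{g},\bar{Q})$ and of the image of $\mathbb{O}_\alpha\cap\mathfrak{g}[u]$ with $\Delta_\alpha$. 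The only (welcome) additions are your explicit residue-theorem verification that $Q_{a(u)}$ descends to a scalar multiple of $\bar{Q}$, which the paper leaves implicit in its formula for the orthogonal complement, and your self-contained dimension count for part (ii), which the paper simply cites from \cite{SZ}.
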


\begin{proof}
Part (ii) was proved in \cite{SZ}. We will prove (i). Assume that $\alpha$ has coefficient 1 in $\alpha_{\rm{max}}$. Then the corresponding maximal order $\mathbb{O}_{\alpha}$ is  given by the formula
\[\mathbb{O}_{\alpha}=u^{-1}\mathfrak{g}_1[[u^{-1}]]+\mathfrak{g}_0[[u^{-1}]]+
u\mathfrak{g}_{-1}[[u^{-1}]],\]
where $\mathfrak{g}_1$ is the sum of the root spaces of positive roots which contain $\alpha$ (with coefficient 1),  $\mathfrak{g}_{-1}$ is the sum of the root spaces of negative roots which contain $\alpha$ (with coefficient 1), and $\mathfrak{g}_0$ consists of $\mathfrak{h}$ and the root spaces of all roots which do not contain  $\alpha$. 

Then  $\mathbb{O}_{\alpha}\cap\mathfrak{g}[u,u^{-1}]=u^{-1}\mathfrak{g}_1[u^{-1}]+\mathfrak{g}_0[u^{-1}]+
u\mathfrak{g}_{-1}[u^{-1}]$ and 
$(\mathbb{O}_{\alpha}\cap\mathfrak{g}[u,u^{-1}])^{\perp}=(u^{-1}-c_1)(u^{-1}-c_2)(u^{-1}\mathfrak{g}_1[u^{-1}]+\mathfrak{g}_0[u^{-1}]+
u\mathfrak{g}_{-1}[u^{-1}])$. 

Let us construct an isomorphism $\bar{\phi}$ between 
$\frac{\mathbb{O}_{\alpha}\cap\mathfrak{g}[u,u^{-1}]}{(\mathbb{O}_{\alpha}\cap\mathfrak{g}[u,u^{-1}])^{\perp}}$ and $\mathfrak{g}\oplus\mathfrak{g}$. We consider $\phi: \mathbb{O}_{\alpha}\cap\mathfrak{g}[u,u^{-1}]\longrightarrow \mathfrak{g}\oplus\mathfrak{g}$ given by 
$\phi(u^{-1}p_1(u^{-1})+p_0(u^{-1})+up_{-1}(u^{-1}))=(c_1p_1(c_1)+p_0(c_1)+c_1^{-1}p_{-1}(c_1), c_2p_1(c_2)+p_0(c_2)+c_2^{-1}p_{-1}(c_2))$, 
where $p_1\in \mathfrak{g}_1[u^{-1}]$, $p_0\in\mathfrak{g}_0[u^{-1}]$ and 
$p_{-1}\in\mathfrak{g}_{-1}[u^{-1}]$. One can easily check that the kernel of this map is exactly $(\mathbb{O}_{\alpha}\cap\mathfrak{g}[u,u^{-1}])^{\perp}$. Moreover, $\phi$ is surjective. Indeed, for any elements $a$, $b$ of 
$\mathfrak{g}$, let us uniquely decompose $a=a_1+a_0+a_{-1}$, $b=b_1+b_0+b_{-1}$with $a_1,b_1\in\mathfrak{g}_1$, $a_0,b_0\in\mathfrak{g}_0$, $a_{-1},b_{-1}\in\mathfrak{g}_{-1}$. Then one can find first degree polynomials $p_1\in \mathfrak{g}_1[u^{-1}]$, $p_0\in\mathfrak{g}_0[u^{-1}]$ and 
$p_{-1}\in\mathfrak{g}_{-1}[u^{-1}]$ such that $c_1p_1(c_1)=a_1$, 
$c_2p_1(c_2)=b_1$; $p_0(c_1)=a_0$, $p_0(c_2)=b_0$; 
$c_1^{-1}p_{-1}(c_1)=a_{-1}$, $c_2^{-1}p_{-1}(c_2)=b_{-1}$.

Thus one obtains an isomorphism $\bar{\phi}:\frac{\mathbb{O}_{\alpha}\cap\mathfrak{g}[u,u^{-1}]}{(\mathbb{O}_{\alpha}\cap\mathfrak{g}[u,u^{-1}])^{\perp}}\longrightarrow \mathfrak{g}\oplus\mathfrak{g}$. This implies that we have a 1-1 correspondence between Lagrangian subalgebras $W$ contained in $\mathbb{O}_{\alpha}\cap\mathfrak{g}[u,u^{-1}]$ and Lagrangian subalgebras $\bar{W}=\phi(W)$ in $\mathfrak{g}\oplus\mathfrak{g}$. Moreover $W$ is transversal to $\mathfrak{g}[[u]]$ if and only if $\bar{W}$ is transversal to $\phi(\mathbb{O}_{\alpha}\cap\mathfrak{g}[u])$. On the other hand, $\phi(\mathbb{O}_{\alpha}\cap\mathfrak{g}[u])$ consists of all pairs of the form $(a_0+c_1^{-1}b_0+b_1,a_0+c_2^{-1}b_0+b_1)$, where $a_0\in\mathfrak{g}_0$, $b_0,b_1\in\mathfrak{g}_{-1}$, which is precisely $\Delta_{\alpha}$. We note that $\mathfrak{g}_0$ is the reductive part of $P_{\alpha}^{-}=\mathfrak{g}_0+\mathfrak{g}_{-1}$. The proof is complete.

\end{proof}

\begin{rem}\label{rem1_A1}
The classification of all Lagrangian subalgebras $\bar{W}$ of $\mathfrak{g}\oplus\mathfrak{g}$ transversal to $\Delta_{\alpha}$ was accomplished in \cite{KPSST}, Theorem 11. This result states that, up to a conjugation which preserves 
$\Delta_{\alpha}$, a subalgebra $\bar{W}$ can be obtained from a pair formed by
a triple $(\Gamma_1,\Gamma_2,A)$ and a tensor $r\in\mathfrak{h}\otimes\mathfrak{h}$ such that $(\Gamma_1,\Gamma_2,A)$ is of type I or II and $r$ satisfies certain equations depending of the case. 
We recall that $\Gamma_1$, $\Gamma_2$ are subsets of $\Gamma$ and $A$ is an isometry between them. 
A triple is of type I if $\alpha\notin \Gamma_2$ and $(\Gamma_1,\theta(\Gamma_2),\theta(A))$ is admissible in the sense of Belavin-Drinfeld (see \cite{BD}). A triple is of type II if $\alpha\in \Gamma_2$,
$A(\beta)=\alpha$ and $(\Gamma_1\setminus\{\beta\},\theta(\Gamma_2\setminus\{\alpha\}),\theta(A))$ is admissible in the sense of Belavin-Drinfeld. Here $\theta$
denotes the Cartan involution of $\mathfrak{g}_0$. 

\end{rem}

\begin{rem}\label{rem2_A1}
The classification of Lagrangian subalgebras $\bar{W}$ can also be given using triples of the form 
$(\Gamma_1,\Gamma_2,A)$, where  
$\Gamma_{1}\subseteq\Gamma^{ext} \setminus \{\alpha\}$, 
$\Gamma_{2}\subseteq\Gamma$ and $A:\Gamma_1\longrightarrow \Gamma_2$ is an isometry. In this presentation, the case $k=1$ can be considered a particular case 
of the classification result which will be given in Remark \ref{generalized BD}.   
\end{rem}

Consider the Lie algebra 
$\mathfrak{g}[\varepsilon]$ with $\varepsilon^2=0$, endowed with the following invariant form: 
\[\bar{Q}_{\varepsilon}(x_1+\varepsilon x_2,y_1+\varepsilon y_2)=K(x_1,y_2)+K(x_2,y_1),\] 
for any elements $x_1$, $y_1$,  $x_2$, $y_2$ of $\mathfrak{g}$.
\begin{thm}\label{A2}
Let $\alpha$ be a simple root and $k$ its coefficient in the decomposition of 
$\alpha_{\rm{max}}$. Let $a(u)=\frac{1}{(1-u)^2}$.  

(i) If $k=1$, there exists a one-to-one correspondence between Lagrangian subalgebras $W$ of $\mathfrak{g}((u))$, with respect to $Q_{a(u)}$, which are transversal to $\mathfrak{g}[[u]]$ and satisfy 
$W \subseteq \mathbb{O}_{\alpha}\cap\mathfrak{g}[u,u^{-1}]$, and Lagrangian subalgebras in $\mathfrak{g}[\varepsilon]$, with respect to $\bar{Q}_{\varepsilon}$, transversal to $P_{\alpha}^{-}+\varepsilon (P_{\alpha}^{-})^{\perp}$. 

(ii) If $k>1$ then there are no Lagrangian subalgebras $W$ of $\mathfrak{g}((u))$, with respect to $Q_{a(u)}$, which are transversal to $\mathfrak{g}[[u]]$ and satisfy $W \subseteq \mathbb{O}_{\alpha}\cap\mathfrak{g}[u,u^{-1}]$. 

\end{thm}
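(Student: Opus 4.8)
The plan is to imitate the proof of Theorem~\ref{A1} almost line by line, the only change being that the two distinct evaluation points $c_1\neq c_2$ are replaced by a single double point, so that $\mathfrak{g}\oplus\mathfrak{g}$ is replaced by $\mathfrak{g}[\varepsilon]$. Part (ii) is \cite{SZ}, so assume $\alpha$ has coefficient $1$ in $\alpha_{\max}$ and write $\mathfrak{g}=\mathfrak{g}_1\oplus\mathfrak{g}_0\oplus\mathfrak{g}_{-1}$ for the associated $\mathbb{Z}$-grading, exactly as in the proof of Theorem~\ref{A1}; thus $\mathcal{O}:=\mathbb{O}_{\alpha}\cap\mathfrak{g}[u,u^{-1}]=u^{-1}\mathfrak{g}_1[u^{-1}]+\mathfrak{g}_0[u^{-1}]+u\mathfrak{g}_{-1}[u^{-1}]$. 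Since now $1/a(u)=(1-u)^2=u^2(u^{-1}-1)^2$, the computation from Theorem~\ref{A1}, with $(u^{-1}-c_1)(u^{-1}-c_2)$ replaced by $(u^{-1}-1)^2$, shows that the radical of $Q_{a(u)}$ on $\mathcal{O}$ is $\mathcal{O}^{\perp}=(u^{-1}-1)^2\mathcal{O}\subseteq\mathcal{O}$. Hence $Q_{a(u)}$ descends to a nondegenerate invariant form on the finite-dimensional quotient $\mathcal{O}/\mathcal{O}^{\perp}$, and, as in \cite{PM}, any Lagrangian subalgebra $W$ of $\mathfrak{g}((u))$ contained in $\mathcal{O}$ contains $\mathcal{O}^{\perp}$, so $W\mapsto W/\mathcal{O}^{\perp}$ gives a bijection onto the Lagrangian subalgebras of $\mathcal{O}/\mathcal{O}^{\perp}$.

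Next I would identify $\mathcal{O}/\mathcal{O}^{\perp}$ with $\mathfrak{g}[\varepsilon]$. Using $\mathbb{C}[u^{-1}]/(u^{-1}-1)^2\cong\mathbb{C}[\varepsilon]$, $p\mapsto p(1)+\varepsilon p'(1)$, define $\phi\colon\mathcal{O}\to\mathfrak{g}[\varepsilon]$ as the restriction to $\mathcal{O}$ of the composite of the relabelling $u\mapsto t^{-1}$ with the quotient map $\mathfrak{g}[t,t^{-1}]\to\mathfrak{g}[t,t^{-1}]/(t-1)^2\cong\mathfrak{g}[\varepsilon]$; concretely, $\phi$ sends $u^{-1}p_1(u^{-1})+p_0(u^{-1})+up_{-1}(u^{-1})$, with $p_i\in\mathfrak{g}_i[u^{-1}]$, to the first jet at $t=u^{-1}=1$ of $g(t):=tp_1(t)+p_0(t)+t^{-1}p_{-1}(t)$, that is to $g(1)+\varepsilon g'(1)$, and one may spell out a formula in the $p_i$ as in Theorem~\ref{A1}. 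Being a composite of Lie algebra maps, $\phi$ is a homomorphism; its kernel is $\mathcal{O}\cap(u^{-1}-1)^2\mathfrak{g}[u,u^{-1}]=(u^{-1}-1)^2\mathcal{O}=\mathcal{O}^{\perp}$ (here one uses that $(u^{-1}-1)^2$ is coprime to $u$ and that $\mathfrak{g}_1,\mathfrak{g}_0,\mathfrak{g}_{-1}$ are linearly independent, so that each graded piece may be divided separately); and $\phi$ is surjective, since in each graded piece one can realise any prescribed value and derivative at $t=1$ by a polynomial of degree at most $1$. This produces the desired isomorphism $\bar{\phi}\colon\mathcal{O}/\mathcal{O}^{\perp}\to\mathfrak{g}[\varepsilon]$, and it is an isometry: by the grading, $K(f,f')=\Phi(u^{-1})$ with $\Phi(t):=K(g_f(t),g_{f'}(t))$, whence $Q_{a(u)}(f,f')=\mathrm{Res}_{u=0}\big((1-u)^{-2}\Phi(u^{-1})\big)=\Phi'(1)=K(g_f(1),g_{f'}'(1))+K(g_f'(1),g_{f'}(1))=\bar{Q}_{\varepsilon}(\phi(f),\phi(f'))$. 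Consequently $W\mapsto\phi(W)$ is a bijection between the Lagrangian subalgebras of $\mathfrak{g}((u))$ contained in $\mathcal{O}$ and the Lagrangian subalgebras of $\mathfrak{g}[\varepsilon]$.

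Finally I would match the transversality conditions. Exactly as in Theorem~\ref{A1}, $W$ is transversal to $\mathfrak{g}[[u]]$ if and only if it is transversal to $\mathbb{O}_{\alpha}\cap\mathfrak{g}[u]$, if and only if $\phi(W)$ is transversal to $\phi(\mathbb{O}_{\alpha}\cap\mathfrak{g}[u])$, so it remains to compute $\phi(\mathbb{O}_{\alpha}\cap\mathfrak{g}[u])$. Intersecting $\mathbb{O}_{\alpha}$ with $\mathfrak{g}[u]$ kills the $\mathfrak{g}_1$-part and every non-constant $\mathfrak{g}_0$-term and keeps the $\mathfrak{g}_{-1}$-direction up to degree $1$, so $\mathbb{O}_{\alpha}\cap\mathfrak{g}[u]=\mathfrak{g}_0+\mathfrak{g}_{-1}+u\mathfrak{g}_{-1}$, and applying $\phi$ gives $\{x+\varepsilon z:x\in\mathfrak{g}_0+\mathfrak{g}_{-1},\ z\in\mathfrak{g}_{-1}\}$. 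Since $\mathfrak{g}_0+\mathfrak{g}_{-1}=P_{\alpha}^{-}$, and since the roots $\gamma$ with $\mathfrak{g}_{-\gamma}\not\subseteq P_{\alpha}^{-}$ are precisely the negative roots containing $\alpha$ (because $\alpha$ has coefficient $1$ in $\alpha_{\max}$), one has $(P_{\alpha}^{-})^{\perp}=\mathfrak{g}_{-1}$; hence $\phi(\mathbb{O}_{\alpha}\cap\mathfrak{g}[u])=P_{\alpha}^{-}+\varepsilon(P_{\alpha}^{-})^{\perp}$, which completes the proof. The only ingredient that is not a transcription of Theorem~\ref{A1} is the residue identity $\mathrm{Res}_{u=0}\big((1-u)^{-2}\Phi(u^{-1})\big)=\Phi'(1)$ together with tracking the sign produced by the substitution $t=u^{-1}$ in $\phi$; that is the point at which I would be most careful, though it remains a routine calculation.
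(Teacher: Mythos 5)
Your proposal is correct and follows essentially the same route as the paper's proof: the same grading $\mathfrak{g}_1\oplus\mathfrak{g}_0\oplus\mathfrak{g}_{-1}$, the same computation of $(\mathbb{O}_{\alpha}\cap\mathfrak{g}[u,u^{-1}])^{\perp}=(u^{-1}-1)^2\mathcal{O}$, and the same map $\phi$ (your first-jet description $g(1)+\varepsilon g'(1)$ is exactly the paper's substitution $u^{-1}\mapsto 1+\varepsilon$, $u\mapsto 1-\varepsilon$), ending with the same identification $\phi(\mathbb{O}_{\alpha}\cap\mathfrak{g}[u])=P_{\alpha}^{-}+\varepsilon(P_{\alpha}^{-})^{\perp}$. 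The only addition is your explicit residue check that $\phi$ is an isometry for $\bar{Q}_{\varepsilon}$, which the paper leaves implicit and which you carry out correctly (the key point being that $K(g_f(t),g_{f'}(t))$ has no negative powers of $t$ by the grading).
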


\begin{proof}
We will only prove (i) (for (ii) see \cite{SZ}). Because $k=1$, one has the following: $\mathbb{O}_{\alpha}\cap\mathfrak{g}[u,u^{-1}]=u^{-1}\mathfrak{g}_1[u^{-1}]+\mathfrak{g}_0[u^{-1}]+
u\mathfrak{g}_{-1}[u^{-1}]$ and 
$(\mathbb{O}_{\alpha}\cap\mathfrak{g}[u,u^{-1}])^{\perp}=(u^{-1}-1)^2(u^{-1}\mathfrak{g}_1[u^{-1}]+\mathfrak{g}_0[u^{-1}]+
u\mathfrak{g}_{-1}[u^{-1}])$. 

Let us construct an isomorphism $\bar{\phi}$ between 
$\frac{\mathbb{O}_{\alpha}\cap\mathfrak{g}[u,u^{-1}]}{(\mathbb{O}_{\alpha}\cap\mathfrak{g}[u,u^{-1}])^{\perp}}$ and $\mathfrak{g}[\varepsilon]$.

We consider $\phi: \mathbb{O}_{\alpha}\cap\mathfrak{g}[u,u^{-1}]\longrightarrow \mathfrak{g}[\varepsilon]$ given by 
$\phi(u^{-1}p_1(u^{-1})+p_0(u^{-1})+up_{-1}(u^{-1}))=(1+\varepsilon)p_1(1+\varepsilon)+p_0(1+\varepsilon)+(1-\varepsilon)p_{-1}(1+\varepsilon)$, 
where $p_1\in \mathfrak{g}_1[u^{-1}]$, $p_0\in\mathfrak{g}_0[u^{-1}]$ and 
$p_{-1}\in\mathfrak{g}_{-1}[u^{-1}]$. One can check that the kernel of this map 
is the ideal generated by $(u^{-1}-1)^2$. The map $\phi$ is also surjective. 
Indeed, let us write $a+\varepsilon b=(a_1+a_0+a_{-1})+\varepsilon (b_1+b_0+b_{-1})$. 
One can uniquely find first degree polynomials $p_1$, $p_0$ and $p_{-1}$ such that $\phi(u^{-1}p_1(u^{-1})+p_0(u^{-1})+up_{-1}(u^{-1}))=a+\varepsilon b$. Straightforward computations give: 
$p_1(u^{-1})=(2a_1-b_1)+u^{-1}(b_1-a_1)$, $p_0(u^{-1})=a_0-b_0+u^{-1}b_0$, $p_{-1}(U^{-1})=-b_{-1}+u^{-1}(a_{-1}+b_{-1})$. 

By means of the above isomorphism, we have a 1-1 correspondence between Lagrangian subalgebras $W$ of $\mathfrak{g}((u))$, with respect to $Q_{a(u)}$, which are transversal to $\mathfrak{g}[[u]]$ and satisfy 
$W \subseteq \mathbb{O}_{\alpha}\cap\mathfrak{g}[u,u^{-1}]$, and Lagrangian subalgebras in $\mathfrak{g}[\varepsilon]$, with respect to $\bar{Q}_{\varepsilon}$, transversal to $\phi(\mathbb{O}_{\alpha}\cap\mathfrak{g}[u])$. 

On the other hand, $\phi(\mathbb{O}_{\alpha}\cap\mathfrak{g}[u])=\phi(
\mathfrak{g}_0+u(\mathfrak{g}_{-1}+u^{-1}\mathfrak{g}_{-1})$ which consists of elements of the form $a_0+(1-\varepsilon)b_{-1}+c_{-1}$, for all $a_0\in \mathfrak{g}_0$ and $b_{-1},c_{-1}\in \mathfrak{g}_{-1}$. Since, $P_{\alpha}^{-}=\mathfrak{g}_0+\mathfrak{g}_{-1}$, it follows that $\phi(\mathbb{O}_{\alpha}\cap\mathfrak{g}[u])$ is precisely 
$P_{\alpha}^{-}+\varepsilon (P_{\alpha}^{-})^{\perp}$. This ends the proof.

\end{proof}

\begin{rem}\label{rem_A2}
In \cite{S} it was shown that Lagrangian subalgebras of $\mathfrak{g}[\varepsilon]$, with respect to $\bar{Q}_{\varepsilon}$, and transversal to $P_{\alpha}^{-}+\varepsilon (P_{\alpha}^{-})^{\perp}$ are in a 1-1 correspondence with pairs $(L,B)$, where $L$ is a subalgebra of $\mathfrak{g}$ satisfying $L+P_{\alpha}^{-}=\mathfrak{g}$ and $B$ is a 2-cocycle on $L$ nondegenerate on $L\cap P_{\alpha}^{-}$. 

\end{rem}

Let us assume again that $\alpha$ is a simple root with coefficient $k$ in
$\alpha_{\rm{max}}$. Denote by $L_{\alpha}$ the Lie subalgebra of $\mathfrak{g}$ whose Dynkin diagram is obtained from the extended Dynkin diagram of $\mathfrak{g}$ by erasing $\alpha$. Then the restriction of the bilinear form $\bar{Q}$ on $\mathfrak{g}\oplus\mathfrak{g}$ is nondegenerate on $L_{\alpha}\times \mathfrak{g}$. 

Consider the standard parabolic subalgebra of $L_{\alpha}$ corresponding to $-\alpha_{\rm{max}}$, $P_{\alpha_{\rm{max}}}^{+}$. We note that $P_{\alpha_{\rm{max}}}^{+}$ and $P_{\alpha}^{-}$ have equal reductive components.
Let $\Delta_{\alpha,\alpha_{\rm{max}}}$ be the set of pairs $(x,y)\in P_{\alpha_{\rm{max}}}^{+}\times P_{\alpha}^{-}$ with equal reductive parts.  

\begin{thm}\label{A3}
Let $\alpha$ be a simple root and $k$ its coefficient in the decomposition of 
$\alpha_{\rm{max}}$. Let $a(u)=\frac{1}{1-u}$. 

There exists a one-to-one correspondence between Lagrangian subalgebras $W$ of $\mathfrak{g}((u))$, with respect to $Q_{a(u)}$, which are transversal to $\mathfrak{g}[[u]]$ and satisfy 
$W \subseteq \mathbb{O}_{\alpha}\cap\mathfrak{g}[u,u^{-1}]$, and Lagrangian subalgebras in $L_{\alpha}\oplus\mathfrak{g}$, with respect to $\bar{Q}$, transversal to $\Delta_{\alpha,\alpha_{\rm{max}}}$. 
\end{thm}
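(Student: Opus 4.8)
The plan is to mimic the proofs of Theorems \ref{A1} and \ref{A2}: we must identify the finite-dimensional quotient $\frac{\mathbb{O}_{\alpha}\cap\mathfrak{g}[u,u^{-1}]}{(\mathbb{O}_{\alpha}\cap\mathfrak{g}[u,u^{-1}])^{\perp}}$ together with its induced bilinear form and the image of $\mathbb{O}_{\alpha}\cap\mathfrak{g}[u]$ inside it. The essential new feature here is that $1/a(u)=1-u$ has degree one but only one root, so the classical double $\mathfrak{g}((u))$ collapses onto a ``mixed'' finite-dimensional double where one factor is all of $\mathfrak{g}$ (coming from evaluation at the finite root $u=1$, i.e. $u^{-1}=1$) and the other factor is only the Lie algebra $L_{\alpha}$ associated to the erased-vertex diagram (coming from the behaviour at $u=\infty$, where the grading by the $\alpha$-coefficient degenerates $\mathbb{O}_{\alpha}$ to a parahoric whose reductive quotient is $L_{\alpha}$, not $\mathfrak{g}$). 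Recall from Theorem \ref{A1} that when $k=1$, $\mathbb{O}_{\alpha}\cap\mathfrak{g}[u,u^{-1}]=u^{-1}\mathfrak{g}_1[u^{-1}]+\mathfrak{g}_0[u^{-1}]+u\mathfrak{g}_{-1}[u^{-1}]$; a similar explicit description holds for general $k$, with the three-term grading replaced by a $(2k{+}1)$-term (or in general finer) $\mathbb{Z}$-grading of $\mathfrak{g}$ induced by $\alpha$.

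First I would compute $(\mathbb{O}_{\alpha}\cap\mathfrak{g}[u,u^{-1}])^{\perp}$ with respect to $Q_{a(u)}$, which since $1/a(u)=1-u$ should come out to $(1-u^{-1})$ times a shifted copy of $\mathbb{O}_{\alpha}\cap\mathfrak{g}[u,u^{-1}]$ — but care is needed because $\mathbb{O}_{\alpha}$ is not symmetric under $u\mapsto u^{-1}$ when $k>1$: the perp will be $\mathbb{O}_{-\alpha_{\max}}$-like near $u=\infty$. Concretely I expect
\[
(\mathbb{O}_{\alpha}\cap\mathfrak{g}[u,u^{-1}])^{\perp}=(u^{-1}-1)\bigl(\mathbb{O}_{-\alpha_{\max}}\cap u^{-1}\mathbb{O}_{\alpha}\cap\mathfrak{g}[u,u^{-1}]\bigr)
\]
or something of this shape; nailing the exact form is one of the computational steps. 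Then I would build $\phi:\mathbb{O}_{\alpha}\cap\mathfrak{g}[u,u^{-1}]\to L_{\alpha}\oplus\mathfrak{g}$ whose second component is evaluation at $u^{-1}=1$ (sending $u^{-1}p_1+p_0+up_{-1}\mapsto$ the $\mathfrak{g}$-element obtained by plugging in $u^{-1}=1$, using $u=1$ too), landing in all of $\mathfrak{g}$, and whose first component records the ``leading part at infinity'', i.e. the class modulo the positive part of the Iwahori filtration, which naturally lives in the reductive-at-infinity algebra $L_{\alpha}$. One checks: (a) $\ker\phi=(\mathbb{O}_{\alpha}\cap\mathfrak{g}[u,u^{-1}])^{\perp}$, (b) $\phi$ is surjective (a polynomial-interpolation argument as in the previous theorems, now asymmetric between the two components), and (c) the induced form on the quotient matches $\bar{Q}$ on $L_{\alpha}\oplus\mathfrak{g}$ — note the minus sign in $\bar Q$ will appear because the residue picks up opposite-sign contributions from $u=1$ and $u=\infty$, and the restriction of $\bar Q$ to $L_{\alpha}\times\mathfrak{g}$ is nondegenerate as already observed in the paragraph preceding the theorem. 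Finally I would compute $\phi(\mathbb{O}_{\alpha}\cap\mathfrak{g}[u])$ and identify it with $\Delta_{\alpha,\alpha_{\max}}$: elements of $\mathfrak{g}[u]$ in $\mathbb{O}_\alpha$ are regular at $u=\infty$ in the $\alpha$-grading, which after passing to the reductive quotient at infinity forces the $L_{\alpha}$-component into the parabolic $P^{+}_{\alpha_{\max}}$ of $L_\alpha$, while the value at $u=1$ is unconstrained inside $P^{-}_{\alpha}$ except that its reductive part must agree with that of the $L_{\alpha}$-component (since both are read off from the same Cartan-plus-level-zero data), which is exactly the definition of $\Delta_{\alpha,\alpha_{\max}}$; the transversality equivalence then follows formally as before.

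The main obstacle I anticipate is handling $k>1$ uniformly: unlike in Theorems \ref{A1} and \ref{A2} where only $k=1$ survives, here the statement is unrestricted in $k$, so the maximal order $\mathbb{O}_\alpha$ has a genuinely multi-step grading and I cannot use the clean three-summand formula. I would need the general description $\mathbb{O}_\alpha=\bigoplus_{j}u^{-\lceil j/k\rceil}\mathfrak{g}_j[[u^{-1}]]$ (sum over the $\alpha$-grading degrees $j$) and to verify carefully that the ``reductive-at-infinity'' quotient of $\mathbb{O}_\alpha\cap\mathfrak{g}[u,u^{-1}]$ — i.e. the quotient by the span of all strictly-positive-filtration pieces together with the $u^{-1}$-part of the degree-zero piece — is precisely $L_{\alpha}$ with its parabolic $P^{+}_{\alpha_{\max}}$ arising as the image of $\mathfrak{g}[u]\cap\mathbb{O}_\alpha$. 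This is where the combinatorics of the extended Dynkin diagram enters: erasing the vertex $\alpha$ produces the diagram of $L_{\alpha}$, and the affine node $-\alpha_{\max}$ becomes an ordinary node of $L_{\alpha}$ whose associated parabolic is $P^{+}_{\alpha_{\max}}$. Once that identification is pinned down, surjectivity of $\phi$, the kernel computation, and the matching of forms are all routine finite-dimensional linear algebra and interpolation, entirely parallel to the arguments already given. I would close by remarking (as the subsequent remarks in the paper presumably do) that the Lagrangian subalgebras of $L_{\alpha}\oplus\mathfrak{g}$ transversal to $\Delta_{\alpha,\alpha_{\max}}$ admit a Belavin--Drinfeld-type classification via admissible triples on the union of the two diagrams.
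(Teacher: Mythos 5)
Your proposal follows essentially the same route as the paper's proof: the paper uses exactly the $(2k+1)$-term grading $\mathfrak{g}=\oplus_{r=-k}^{k}\mathfrak{g}_r$ by the $\alpha$-coefficient, defines $\phi$ with second component the evaluation at $u^{-1}=1$ and first component $p_k(0)+p_0(0)+p_{-k}(0)$ (your ``reductive quotient at infinity'', which is indeed $L_\alpha=\mathfrak{g}_k+\mathfrak{g}_0+\mathfrak{g}_{-k}$), and identifies $\phi(\mathbb{O}_\alpha\cap\mathfrak{g}[u])$ with $\Delta_{\alpha,\alpha_{\max}}$ exactly as you describe. The one computation you left open comes out in the paper as $(\mathbb{O}_{\alpha}\cap\mathfrak{g}[u,u^{-1}])^{\perp}=(1-u)\bigl(u^{-3}\mathfrak{g}_k[u^{-1}]+\sum_{r=0}^{k-1}u^{-2}\mathfrak{g}_r[u^{-1}]+\sum_{r=-k}^{-1}u^{-1}\mathfrak{g}_r[u^{-1}]\bigr)$, confirming the shape you anticipated.
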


\begin{proof}
For each $r$, $-k\leq r\leq k$, let $R_{r}$ denote the set of all roots which contain $\alpha$ with coefficient $r$. Let $\mathfrak{g}_{0}=\mathfrak{h}\oplus\sum_{\beta\in R_{0}}\mathfrak{g}_\beta$ and $\mathfrak{g}_r=\sum_{\beta\in R_r}\mathfrak{g}_\beta$. Then
\[
\mathbb{O}_\alpha=\sum_{r=1}^k u^{-1}\mathfrak{g}_r[[u^{-1}]]+
\sum_{r=1-k}^0 \mathfrak{g}_r[[u^{-1}]]+u\mathfrak{g}_{-k}[[u^{-1}]],\]

\[\mathbb{O}_{\alpha}\cap\mathfrak{g}[u,u^{-1}]=\sum_{r=1}^k u^{-1}\mathfrak{g}_r[u^{-1}]+
\sum_{r=1-k}^0 \mathfrak{g}_r[u^{-1}]+u\mathfrak{g}_{-k}[u^{-1}],\] 
\[(\mathbb{O}_{\alpha}\cap\mathfrak{g}[u,u^{-1}])^{\perp}=(1-u)(u^{-3}\mathfrak{g}_k
[u^{-1}]+\sum_{r=0}^{k-1}u^{-2}\mathfrak{g}_r[u^{-1}]+\sum_{r=-k}^{-1}u^{-1} \mathfrak{g}_r[u^{-1}]).\]

Let $\phi: \mathbb{O}_{\alpha}\cap\mathfrak{g}[u,u^{-1}]\longrightarrow (\mathfrak{g}_k+\mathfrak{g}_0+\mathfrak{g}_{-k})\oplus \mathfrak{g}$ be defined by  
\[ \phi(\sum_{r=1}^k u^{-1}p_r(u^{-1})+\sum_{r=1-k}^0p_r(u^{-1})+up_{-k}(u^{-1}))=\]
\[(p_k(0)+p_0(0)+p_{-k}(0),\sum_{r=1}^kp_r(1)+\sum_{r=1-k}^0p_r(1)+p_{-k}(1)).\]
One can check that $\phi$ is an epimorphism whose kernel is $\mathbb{O}_{\alpha}\cap\mathfrak{g}[u,u^{-1}])^{\perp}$. By means of this morphism, we have a 1-1 correspondence between Lagrangian subalgebras $W$ of $\mathfrak{g}((u))$, contained in $\mathbb{O}_{\alpha}\cap\mathfrak{g}[u,u^{-1}]$, and Lagrangian subalgebras $\bar{W}$ of $(\mathfrak{g}_k+\mathfrak{g}_0+\mathfrak{g}_{-k})\oplus \mathfrak{g}$. We observe that $\mathfrak{g}_k+\mathfrak{g}_0+\mathfrak{g}_{-k}=L_{\alpha}$.
On the other hand, $\mathbb{O}_{\alpha}\cap\mathfrak{g}[u]=\sum_{r=1-k}^0\mathfrak{g}_r+u(\mathfrak{g}_{-k}+u^{-1}\mathfrak{g}_{-k})$. Its image in 
$L_{\alpha}\oplus\mathfrak{g}$ via $\phi$ consists of elements of the form
$(a_0+a_{-k},a_0+ \sum_{r=-k}^{-1}a_r+b_{-k})$, where $a_r\in\mathfrak{g}_r$, $a_{-k},b_{-k}\in\mathfrak{g}_{-k}$. Since $P_{\alpha_{\rm{max}}}^{+}=\mathfrak{g}_0+\mathfrak{g}_{-k}$ and $P_{\alpha}^{-}=\mathfrak{g}_0+\mathfrak{g}_1+...+\mathfrak{g}_{-k}$, this set coincides with $\Delta_{\alpha,\alpha_{\rm{max}}}$. The proof is now complete.
\end{proof}

\begin{rem}\label{generalized BD}
The classification of all Lagrangian subalgebras $\bar{W}$ of $\mathfrak{g}\oplus\mathfrak{g}$ transversal to $\Delta_{\alpha,\alpha_{\rm{max}}}$ was accomplished in \cite{PS}, Theorem 2.13. This result states that, up to a conjugation which preserves 
$\Delta_{\alpha,\alpha_{\rm{max}}}$, a subalgebra $\bar{W}$ can be obtained from a pair formed by
a triple $(\Gamma_1,\Gamma_2,A)$ of type I or II and a subspace
$\mathfrak{i}_{\mathfrak{a}}$ satisfying a certain condition. Let us recall how types I and II are defined. 
Let $i$ denote the embedding of $\mathfrak{g}_0$ into $L_{\alpha}$.
Here one considers triples of the form $(\Gamma_{1},\Gamma_{2},A)$, where $\Gamma_{1}\subseteq\Gamma^{ext} \setminus \{\alpha\}$, 
$\Gamma_{2}\subseteq\Gamma$ and $A$ is an isometry between $\Gamma_{1}$ and $\Gamma_{2}$. 
A triple $(\Gamma_{1},\Gamma_{2},A)$
is called of \textit{type I} if $\alpha\notin\Gamma_{2}$ and $(\Gamma_{1},i(\Gamma_{2}),iA)$ is an admissible triple in the sense of Belavin--Drinfeld.
The triple $(\Gamma_{1},\Gamma_{2},A)$ is called of \textit{type II} if $\alpha\in\Gamma_{2}$ and 
$A(\beta)=\alpha$, for some
$\beta\in\Gamma_{1}$ and $(\Gamma_{1}\setminus\{\beta\},
i(\Gamma'_{2}\setminus\{\alpha\}),iA')$ is an
admissible triple in the sense of Belavin--Drinfeld.
The space $\mathfrak{i}_{\mathfrak{a}}$ is a Lagrangian subspace of
$\mathfrak{a}:=\{(h_{1},h_{2})\in\mathfrak{h}\times\mathfrak{h}:\beta(h_{1})=0,
\gamma(h_{2})=0, \forall\beta\in\Gamma_{1}, \forall \gamma\in\Gamma_{2}\}$.

\end{rem}

Finally, the remaining case $a(u)=1$ follows by reformulating Prop. 3.2.1. from \cite{S}. The result is the following:

Let $\alpha$ be a simple root with coefficient $k$. There exists a 1-1 correspondence between  Lagrangian subalgebras $W$ of $\mathfrak{g}((u))$, with respect to $Q_{a(u)}$, which are transversal to $\mathfrak{g}[[u]]$ and satisfy 
$W \subseteq \mathbb{O}_{\alpha}\cap\mathfrak{g}[u,u^{-1}]$, and Lagrangian subalgebras $\bar{W}$ in $(L_{\alpha}+\varepsilon^{k} L_{\alpha})\oplus (\oplus_r \varepsilon^{r}V_{\alpha,r})$ transversal to $(P_{\alpha}^{-}+\varepsilon^{k}(P_{\alpha}^{-})^{\perp})\oplus(\oplus_r\varepsilon^rP_{\alpha,r}^{-})$ (see notation in \cite{S}, p. 537).

This concludes the analysis of the Lie bialgebra structures in case I. 

\section{Lie bialgebra structures on  $\mathfrak{g}[u]$ in case II}

 Consider $\mathfrak{g}((u))\oplus\mathfrak{g}$ endowed with
\[ Q_{a(u)}(f_1(u)+x_1,f_2(u)+x_2)=\mathrm{Res}_{u=0}(u^{-1}a(u)K(f_1(u),f_2(u)))\]\[-K(x_1,x_2),\]
for all $f_1(u), f_2(u)\in\mathfrak{g}((u))$ and $x_1,x_2\in\mathfrak{g}$. According to \cite{SZ}, the following statement holds: 
\begin{prop}
There exists a one-to-one correspondence between Lie bialgebra structures 
$\delta$ on $\mathfrak{g}[u]$ satisfying  
$D_{\bar{\delta}}(\mathfrak{g}[[u]])=\mathfrak{g}((u))\oplus\mathfrak{g}$ and 
bounded Lagrangian subalgebras $W$ of $\mathfrak{g}((u))\oplus\mathfrak{g}$, 
with respect to 
the nondegenerate bilinear form $Q_{a(u)}$, and transversal to $\mathfrak{g}[[u]]$.

\end{prop}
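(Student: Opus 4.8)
The plan is to deduce this from Drinfeld's equivalence between Lie bialgebras and Manin triples, applied to $\mathfrak{g}[[u]]$, following the same scheme as the proof of the corresponding statement for Case~I recalled at the beginning of Section~2. First I would recall, from \cite{SZ}, that every Lie bialgebra structure $\delta$ on $\mathfrak{g}[u]$ extends to a structure $\bar\delta$ on $\mathfrak{g}[[u]]$ whose classical double $D_{\bar\delta}$ is one of the three Lie algebras listed in the Introduction, and that we are now in the situation $D_{\bar\delta}(\mathfrak{g}[[u]])\cong\mathfrak{g}((u))\oplus\mathfrak{g}$ with bilinear form $Q_{a(u)}$. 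The key point is that under this identification $\mathfrak{g}[[u]]$ sits inside the double as the subalgebra $\{(f(u),f(0)):f\in\mathfrak{g}[[u]]\}$, and one checks directly that $Q_{a(u)}$ vanishes on it: since $a(0)=1$, we have $\mathrm{Res}_{u=0}(u^{-1}a(u)K(f,g))=K(f(0),g(0))$ for regular $f,g$, which cancels the term $-K(f(0),g(0))$; a completeness/dimension count then shows this isotropic subalgebra is in fact Lagrangian.

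For the forward direction, given such a $\delta$, the double $D_{\bar\delta}=\mathfrak{g}((u))\oplus\mathfrak{g}$ carries the Manin-triple decomposition consisting of $\mathfrak{g}[[u]]$ (embedded as above) together with $W:=\mathfrak{g}[[u]]^{*}$, the complementary Lagrangian subalgebra; the identity $D_{\bar\delta}=\mathfrak{g}[[u]]\oplus W$ as vector spaces is precisely transversality of $W$ to $\mathfrak{g}[[u]]$. Boundedness of $W$ is then read off from the hypothesis that $\bar\delta$ arises from a polynomial structure: since $\delta(\mathfrak{g}[u])\subseteq\mathfrak{g}[u]\wedge\mathfrak{g}[u]$, the coefficients of the cobracket have uniformly bounded degrees, which forces a lower bound on the valuations of the elements of $W$, i.e. boundedness in the sense recalled before Proposition~\ref{max_ord}.

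For the converse, starting from a bounded Lagrangian subalgebra $W$ of $\mathfrak{g}((u))\oplus\mathfrak{g}$ transversal to $\mathfrak{g}[[u]]$, the triple $(\mathfrak{g}((u))\oplus\mathfrak{g},\,\mathfrak{g}[[u]],\,W)$ is by construction a Manin triple, so Drinfeld's correspondence yields a Lie bialgebra structure $\bar\delta$ on $\mathfrak{g}[[u]]$ whose double is canonically $\mathfrak{g}((u))\oplus\mathfrak{g}$ with the form $Q_{a(u)}$. One then has to check that $\bar\delta$ restricts to an honest Lie bialgebra structure $\delta$ on $\mathfrak{g}[u]$, namely that $\delta(\mathfrak{g}[u])\subseteq\mathfrak{g}[u]\wedge\mathfrak{g}[u]$ with no denominators or infinite tails; this is exactly the step where boundedness of $W$ is used, through the explicit pairing between $\mathfrak{g}[[u]]$ and $W$. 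Finally one verifies that $\delta\mapsto W$ and $W\mapsto\delta$ are mutually inverse, which is immediate from the uniqueness part of Drinfeld's equivalence once both constructions are seen to produce the same double.

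The main obstacle I expect is not the Manin-triple bookkeeping but the infinite-dimensional subtleties around it: making precise which notion of restricted dual and double is used, so that $D_{\bar\delta}(\mathfrak{g}[[u]])$ is literally $\mathfrak{g}((u))\oplus\mathfrak{g}$, and establishing the equivalence ``$\bar\delta$ restricts to a polynomial structure on $\mathfrak{g}[u]$'' $\Longleftrightarrow$ ``$W$ is bounded''. Linked to this is the verification that the trichotomy is faithfully detected, i.e. that $D_{\bar\delta}(\mathfrak{g}[[u]])\cong\mathfrak{g}((u))\oplus\mathfrak{g}$ — with no summand $\varepsilon\mathfrak{g}$ and with no collapse to $\mathfrak{g}((u))$ alone — pins down both the ambient double and the form $Q_{a(u)}$ up to the normalizations already fixed. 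Once these points are settled, the remainder is the same formal computation as in Case~I.
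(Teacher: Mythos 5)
The paper does not actually prove this proposition: it is quoted from \cite{SZ} (as are its analogues in Cases I and III), so there is no in-paper argument to compare yours against. Your outline follows the route one would expect \cite{SZ} to take, namely Drinfeld's equivalence between Lie bialgebra structures and Manin triples, with $\mathfrak{g}[[u]]$ embedded in $\mathfrak{g}((u))\oplus\mathfrak{g}$ as $\{(f,f(0))\}$. Your verification that this copy is isotropic is correct; note however that in this infinite-dimensional setting the Lagrangian property should be obtained by computing $\mathfrak{g}[[u]]^{\perp}$ directly (taking $f=u^{n}x$ with $n\geq 1$ forces the $\mathfrak{g}((u))$-component of an orthogonal element to lie in $\mathfrak{g}[[u]]$, and $f=x$ constant then pins down the $\mathfrak{g}$-component), not by a ``dimension count,'' which has no meaning here; the direct computation does go through.

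As a proof, though, the proposal is incomplete exactly where you flag it. The two substantive points --- (a) that the notion of dual and double for the topological Lie bialgebra $(\mathfrak{g}[[u]],\bar\delta)$ can be set up so that $D_{\bar\delta}(\mathfrak{g}[[u]])$ is literally $\mathfrak{g}((u))\oplus\mathfrak{g}$ equipped with $Q_{a(u)}$, and (b) the equivalence between boundedness of $W$ and the property that $\bar\delta$ restricts to an honest polynomial cobracket on $\mathfrak{g}[u]$ --- are asserted rather than established, and they carry essentially all of the content of the statement; the surrounding Manin-triple bookkeeping is formal. Since the paper itself defers precisely these points to \cite{SZ}, your sketch is a reasonable reconstruction of the intended argument, but it is not a self-contained proof.
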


For any $\sigma\in\mathrm{Aut}_{\mathbb{C}[u]}(\mathfrak{g}[u])$, denote by  $\tilde{\sigma}(u)=\sigma(u)\oplus\sigma(0)$, regarded as an automorphism of $\mathfrak{g}((u))\oplus\mathfrak{g}$. According to \cite{PM}, the following result holds:

\begin{prop}\label{max_ord2}
Suppose that $W$ is a bounded Lagrangian subalgebra of $\mathfrak{g}((u))\oplus\mathfrak{g}$, with respect to $Q_{a(u)}$ and transversal to $\mathfrak{g}[[u]]$. 
Then there exists $\sigma\in\mathrm{Aut}_{\mathbb{C}[u]}(\mathfrak{g}[u])$ such that $\tilde{\sigma}(u)(W)\subseteq (\mathbb{O}_{\alpha}\cap \mathfrak{g}[u,u^{-1}])\oplus\mathfrak{g}$, where $\alpha$ is either a simple root or $-\alpha_{\max}$. 
\end{prop}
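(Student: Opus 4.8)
The plan is to mirror the argument used for Case~I in Proposition~\ref{max_ord}, adapting it to the fact that the classical double is now $\mathfrak{g}((u))\oplus\mathfrak{g}$ rather than $\mathfrak{g}((u))$ alone. First I would recall from \cite{PM,SZ} that a bounded Lagrangian subalgebra $W$ of $\mathfrak{g}((u))\oplus\mathfrak{g}$ transversal to $\mathfrak{g}[[u]]$ projects, under the first projection $\pi_1\colon\mathfrak{g}((u))\oplus\mathfrak{g}\to\mathfrak{g}((u))$, onto a bounded subalgebra of $\mathfrak{g}((u))$ which is still transversal to $\mathfrak{g}[[u]]$. The key structural input is that the relevant notion of boundedness for $W$ in $\mathfrak{g}((u))\oplus\mathfrak{g}$ is governed entirely by the $\mathfrak{g}((u))$-component, since the $\mathfrak{g}$-summand is finite dimensional; thus $W$ is commensurable with $\pi_1(W)\oplus\mathfrak{g}$ up to finite-dimensional corrections, and $\pi_1(W)$ is a bounded subalgebra of $\mathfrak{g}((u))$ in the sense of \cite{S}.

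Next I would invoke the embedding theory of bounded subalgebras of $\mathfrak{g}((u))$ into maximal orders, as used in the proof of Proposition~\ref{max_ord}: any bounded subalgebra of $\mathfrak{g}((u))$ containing a suitable lattice is, after applying an automorphism $\sigma\in\mathrm{Aut}_{\mathbb{C}[u]}(\mathfrak{g}[u])$, contained in $\mathbb{O}_{\alpha}$ for some vertex $\alpha$ of the extended Dynkin diagram — i.e.\ $\alpha$ is a simple root or $-\alpha_{\max}$. The point of passing to $\tilde\sigma(u)=\sigma(u)\oplus\sigma(0)$ is that this is precisely the automorphism of $\mathfrak{g}((u))\oplus\mathfrak{g}$ that restricts to $\sigma(u)$ on the first summand and, on the $\mathfrak{g}$-summand, to the evaluation $\sigma(0)$ of $\sigma(u)$ at $u=0$; this is the unique extension of $\sigma(u)$ that is compatible with the definition of $Q_{a(u)}$ on $\mathfrak{g}((u))\oplus\mathfrak{g}$ (the $\mathfrak{g}$-part of the form is the Killing form, on which $\sigma(0)$ acts isometrically). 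Hence $\tilde\sigma(u)$ is an isometry of $Q_{a(u)}$ and maps Lagrangian subalgebras transversal to $\mathfrak{g}[[u]]$ to Lagrangian subalgebras transversal to $\mathfrak{g}[[u]]$.

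Applying $\sigma(u)$ to put $\pi_1(W)$ inside $\mathbb{O}_{\alpha}$, I would then need to verify that $\tilde\sigma(u)(W)$ itself lands inside $(\mathbb{O}_{\alpha}\cap\mathfrak{g}[u,u^{-1}])\oplus\mathfrak{g}$. The containment of the second component in $\mathfrak{g}$ is automatic. For the first component: since $\tilde\sigma(u)(W)$ is Lagrangian, bounded and transversal to $\mathfrak{g}[[u]]$, it must be contained in the intersection of $\mathfrak{g}[u,u^{-1}]\oplus\mathfrak{g}$ with a dual-type lattice — the transversality to $\mathfrak{g}[[u]]$ together with Lagrangianity forces the Laurent polynomial structure exactly as in the Case~I argument, because $1/a(u)$ is a polynomial (here of degree at most $1$) and $(\mathfrak{g}[[u]])^{\perp}$ is an explicit lattice. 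Combining with $\pi_1(\tilde\sigma(u)(W))=\sigma(u)(\pi_1(W))\subseteq\mathbb{O}_{\alpha}$ gives $\pi_1(\tilde\sigma(u)(W))\subseteq\mathbb{O}_{\alpha}\cap\mathfrak{g}[u,u^{-1}]$, which is the claim.

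The main obstacle, and the step deserving the most care, is the reduction of the boundedness and maximal-order-embedding statement from $W\subseteq\mathfrak{g}((u))\oplus\mathfrak{g}$ to its projection $\pi_1(W)\subseteq\mathfrak{g}((u))$: one must check that the extra $\mathfrak{g}$-summand does not obstruct the existence of a $\mathbb{C}[u]$-automorphism doing the embedding, and in particular that the \emph{same} $\sigma$ works simultaneously for $\pi_1(W)$ and, via $\tilde\sigma(u)$, for $W$. This works because $\mathrm{Aut}_{\mathbb{C}[u]}(\mathfrak{g}[u])$ acts on $\mathfrak{g}((u))$ in a way that is entirely determined by its behaviour at the relevant poles, and the assignment $\sigma\mapsto\tilde\sigma$ is a homomorphism respecting this action; the finite-dimensional summand contributes only the harmless factor $\sigma(0)\in\mathrm{Aut}(\mathfrak{g})$. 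Once this compatibility is in place, the rest is a direct transcription of the proof of Proposition~\ref{max_ord}.
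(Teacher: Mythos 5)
The paper does not actually prove this proposition: it is quoted verbatim from \cite{PM} (``According to \cite{PM}, the following result holds''), so there is no in-text argument to compare yours against. Judged on its own, your overall strategy --- push everything onto the $\mathfrak{g}((u))$-component, run the maximal-order embedding there, and extend the resulting $\sigma$ to $\tilde\sigma(u)=\sigma(u)\oplus\sigma(0)$ --- is the right general idea and is surely close to what \cite{PM} does.

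There is, however, a concrete gap in your reduction step. You claim that $\pi_1(W)$ is ``still transversal to $\mathfrak{g}[[u]]$'' and then invoke Proposition \ref{max_ord} for it. Both halves of this are problematic. First, $\mathfrak{g}[[u]]$ sits inside $\mathfrak{g}((u))\oplus\mathfrak{g}$ via $f\mapsto(f,f(0))$ (this is what makes it isotropic for the Case~II form), and for a Lagrangian $W$ one always has $\pi_1(W)\cap\mathfrak{g}[[u]]\neq 0$: if $\pi_1$ were injective on the $\mathfrak{g}$-direction issue, i.e.\ if $W\supseteq 0\oplus\mathfrak{g}$, then $W\subseteq(0\oplus\mathfrak{g})^{\perp}=\mathfrak{g}((u))\oplus 0$, a contradiction; hence $W\cap(0\oplus\mathfrak{g})$ is a proper subspace of $0\oplus\mathfrak{g}$ and a dimension count shows $\pi_1(W)$ meets $\mathfrak{g}[[u]]$ nontrivially. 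Second, even setting transversality aside, $\pi_1(W)$ is not Lagrangian for the Case~I form $Q_{a(u)}$ on $\mathfrak{g}((u))$ (the Case~II form carries the extra factor $u^{-1}$ and subtracts a Killing-form term on the $\mathfrak{g}$-summand), so Proposition \ref{max_ord} cannot be applied to it as a black box. What you actually need is the order-theoretic statement underlying Proposition \ref{max_ord} (from \cite{S} and \cite{PM}): any \emph{bounded} subalgebra $V\subseteq\mathfrak{g}((u))$ with $V+\mathfrak{g}[[u]]=\mathfrak{g}((u))$ is, after applying some $\sigma\in\mathrm{Aut}_{\mathbb{C}[u]}(\mathfrak{g}[u])$, contained in a maximal order $\mathbb{O}_{\alpha}$. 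That version applies to $\pi_1(W)$, since boundedness and $\pi_1(W)+\mathfrak{g}[[u]]=\mathfrak{g}((u))$ do survive the projection; once you phrase the reduction this way, the rest of your outline (isometry of $\tilde\sigma(u)$, Laurent-polynomial containment from Lagrangianity and transversality of $W$ itself) goes through.
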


\begin{thm}\label{B1}
Let $\alpha$ be a simple root and $k$ its coefficient in the decomposition of 
$\alpha_{\rm{max}}$. Let $a(u)=\frac{1}{1-u}$. 

(i) If $k=1$, there exists a one-to-one correspondence between Lagrangian subalgebras $W$ of $\mathfrak{g}((u))\oplus\mathfrak{g}$, with respect to $Q_{a(u)}$, which are transversal to $\mathfrak{g}[[u]]$ and satisfy 
$W \subseteq (\mathbb{O}_{\alpha}\cap\mathfrak{g}[u,u^{-1}])\oplus\mathfrak{g}$, and Lagrangian subalgebras in $\mathfrak{g}\oplus\mathfrak{g}$, with respect to $\bar{Q}$, transversal to $\Delta_{\alpha}$. 

(ii) If $k>1$, there are no Lagrangian subalgebras $W$ of $\mathfrak{g}((u))\oplus\mathfrak{g}$, with respect to $Q_{a(u)}$, which are transversal to $\mathfrak{g}[[u]]$ and satisfy 
$W \subseteq (\mathbb{O}_{\alpha}\cap\mathfrak{g}[u,u^{-1}])\oplus\mathfrak{g}$.\end{thm}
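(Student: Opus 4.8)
The plan is to mimic the proof of Theorem \ref{A3} (the case II situation with $a(u)=1/(1-u)$ reduces, via the extra summand $\mathfrak{g}$ coming from the classical double, to a problem on $\mathfrak{g}\oplus\mathfrak{g}$), adapting it to the presence of the additional direct summand $\mathfrak{g}$ in the classical double. First I would record, for $k=1$, the explicit description of $\mathbb{O}_{\alpha}\cap\mathfrak{g}[u,u^{-1}]$ and of its orthogonal complement with respect to the form $Q_{a(u)}$ on $(\mathbb{O}_{\alpha}\cap\mathfrak{g}[u,u^{-1}])\oplus\mathfrak{g}$; here the residue is taken with the extra factor $u^{-1}a(u)=u^{-1}/(1-u)$ and there is the extra $-K(x_1,x_2)$ term, so the orthogonal complement inside the polynomial part will again be of the form $(1-u)(\dots)$ but with an extra shift in the powers of $u$, and the $\mathfrak{g}$-summand pairs with itself. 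One then checks that the natural candidate for a Lagrangian-preserving quotient is again $\mathfrak{g}\oplus\mathfrak{g}$.

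The key step is to construct an explicit isometry
\[
\bar{\phi}:\frac{(\mathbb{O}_{\alpha}\cap\mathfrak{g}[u,u^{-1}])\oplus\mathfrak{g}}{((\mathbb{O}_{\alpha}\cap\mathfrak{g}[u,u^{-1}])\oplus\mathfrak{g})^{\perp}}\longrightarrow \mathfrak{g}\oplus\mathfrak{g},
\]
induced by a map $\phi$ on $(\mathbb{O}_{\alpha}\cap\mathfrak{g}[u,u^{-1}])\oplus\mathfrak{g}$. Concretely, writing an element of $\mathbb{O}_{\alpha}\cap\mathfrak{g}[u,u^{-1}]$ as $u^{-1}p_1(u^{-1})+p_0(u^{-1})+up_{-1}(u^{-1})$ with $p_1\in\mathfrak{g}_1[u^{-1}]$, $p_0\in\mathfrak{g}_0[u^{-1}]$, $p_{-1}\in\mathfrak{g}_{-1}[u^{-1}]$, the first component of $\phi$ should be an evaluation at $u=1$ (the pole of $a(u)$, where the residue pairing degenerates on the polynomial part), producing $p_1(1)+p_0(1)+p_{-1}(1)\in\mathfrak{g}$; the second component should read off the $\mathfrak{g}$-summand together with a suitable leading/constant coefficient of the polynomials, so that the $-K(x_1,x_2)$ term matches the $-$ sign in $\bar{Q}$. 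I would then verify (i) $\ker\phi=((\mathbb{O}_{\alpha}\cap\mathfrak{g}[u,u^{-1}])\oplus\mathfrak{g})^{\perp}$, by a dimension/degree count together with the description of the orthogonal complement; (ii) surjectivity of $\phi$, by the usual interpolation argument producing first-degree polynomials $p_1,p_0,p_{-1}$ with prescribed values; (iii) that $\bar\phi$ is an isometry $\bar Q\circ(\bar\phi\times\bar\phi)=Q_{a(u)}$; and (iv) that it is a Lie algebra homomorphism. From this, Lagrangian subalgebras $W$ with $W\subseteq (\mathbb{O}_{\alpha}\cap\mathfrak{g}[u,u^{-1}])\oplus\mathfrak{g}$ correspond bijectively to Lagrangian $\bar W=\phi(W)$ in $\mathfrak{g}\oplus\mathfrak{g}$, and transversality of $W$ to $\mathfrak{g}[[u]]$ translates into transversality of $\bar W$ to $\phi((\mathbb{O}_{\alpha}\cap\mathfrak{g}[u])\oplus 0)$ — or rather to the image of the appropriate ``positive'' part of the double.

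The final computation is to identify $\phi$ of that positive part explicitly and to check it equals $\Delta_{\alpha}$, the pairs $(x,y)\in P_{\alpha}^{-}\times P_{\alpha}^{-}$ with the same reductive part (recall $P_{\alpha}^{-}=\mathfrak{g}_0+\mathfrak{g}_{-1}$ and $\mathfrak{g}_0$ is its reductive part). Here one must be a little careful about what the relevant ``positive'' subalgebra is in case II: it is not simply $\mathfrak{g}[[u]]$ intersected with the polynomial model, but rather the image of $\mathfrak{g}[[u]]$ under the identification used in Proposition \ref{max_ord2}, which picks up the diagonal-type contribution $\sigma(0)$ in the $\mathfrak{g}$-summand; tracing this through is what forces the second component of $\phi$ to involve the $\mathfrak{g}$-summand, and it is the main place where case II differs from case I. For part (ii), $k>1$, I would simply invoke \cite{SZ}, as the authors do in the analogous Theorems \ref{A1}(ii), \ref{A2}(ii): when $\alpha$ has coefficient $k>1$ the order $\mathbb{O}_{\alpha}$ is too large for a bounded Lagrangian transversal to $\mathfrak{g}[[u]]$ to fit inside $(\mathbb{O}_{\alpha}\cap\mathfrak{g}[u,u^{-1}])\oplus\mathfrak{g}$.

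I expect the main obstacle to be step (iv) together with the correct bookkeeping in the last paragraph: verifying that $\phi$ respects brackets requires knowing that the evaluation-at-$1$ map and the coefficient-extraction map are each algebra homomorphisms on $\mathbb{O}_{\alpha}\cap\mathfrak{g}[u,u^{-1}]$, which uses the grading $\mathfrak{g}=\mathfrak{g}_1\oplus\mathfrak{g}_0\oplus\mathfrak{g}_{-1}$ and the fact that $[\mathfrak{g}_i,\mathfrak{g}_j]\subseteq\mathfrak{g}_{i+j}$ (with $\mathfrak{g}_{\pm 2}=0$ since $k=1$); and getting the signs and the extra $u^{-1}$ in the residue form to line up with $\bar Q$ so that $\Delta_{\alpha}$ — rather than some twisted variant — comes out is where the computation is most error-prone.
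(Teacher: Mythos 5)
Your plan is essentially the paper's proof: the paper's map $\phi$ sends $(u^{-1}p_1(u^{-1})+p_0(u^{-1})+up_{-1}(u^{-1}),a)$ to $(p_1(1)+p_0(1)+p_{-1}(1),a)$, i.e.\ evaluation at $u=1$ in the first slot and the bare $\mathfrak{g}$-summand in the second, with kernel $(u^{-1}-1)(\mathbb{O}_{\alpha}\cap\mathfrak{g}[u,u^{-1}])$, and the image of the diagonally embedded $\mathbb{O}_{\alpha}\cap\mathfrak{g}[u]$ (the subtlety you correctly flag) is exactly $\Delta_{\alpha}$; part (ii) is likewise delegated to \cite{SZ}. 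The only deviation is your hedge that the second component might need to absorb a coefficient of the polynomials --- it does not, since the extra factor $u^{-1}a(u)$ already makes evaluation at $u=1$ an isometry on the polynomial part.
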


\begin{proof}
If $k=1$, then $\mathbb{O}_{\alpha}\cap\mathfrak{g}[u,u^{-1}]=u^{-1}\mathfrak{g}_1[u^{-1}]+\mathfrak{g}_0[u^{-1}]+
u\mathfrak{g}_{-1}[u^{-1}]$ and 
$(\mathbb{O}_{\alpha}\cap\mathfrak{g}[u,u^{-1}]\oplus\mathfrak{g})^{\perp}=(u^{-1}-1)(u^{-1}\mathfrak{g}_1[u^{-1}]+\mathfrak{g}_0[u^{-1}]+
u\mathfrak{g}_{-1}[u^{-1}])$. Then there exists an epimorphism 
\[\phi: \mathbb{O}_{\alpha}\cap\mathfrak{g}[u,u^{-1}]\oplus\mathfrak{g}\longrightarrow \mathfrak{g}\oplus\mathfrak{g}\] given by 
\[\phi((u^{-1}p_1(u^{-1})+p_0(u^{-1})+up_{-1}(u^{-1}),a)=p_1(1)+p_0(1)+p_{-1}(1),a),\]for all $p_1(u^{-1})\in\mathfrak{g}_1[u^{-1}]$, $p_0(u^{-1})\in\mathfrak{g}_0[u^{-1}]$, $p_{-1}(u^{-1})\in\mathfrak{g}_{-1}[u^{-1}]$ and $a\in\mathfrak{g}$. Obviously the kernel of $\phi$ is $(\mathbb{O}_{\alpha}\cap\mathfrak{g}[u,u^{-1}]\oplus\mathfrak{g})^{\perp}$. Let us denote by $\bar{\phi}$ the isomorphism induced by $\phi$ between the quotient and $\mathfrak{g}\oplus\mathfrak{g}$. One can easily check that $\bar{\phi}((\mathbb{O}_{\alpha}\oplus\mathfrak{g})\cap\mathfrak{g}[u])=\Delta_{\alpha}$
and thus we have a correspondence between $W$ and Lagrangian subalgebras $\bar{W}$ of $\mathfrak{g}\oplus\mathfrak{g}$ transversal to  $\Delta_{\alpha}$. 

Statement (ii) was proved in \cite{SZ}. This ends the proof.
\end{proof}

\begin{rem}
We see that there is an analogy between the above result and Theorem \ref{A1}. 
Therefore Lagrangian subalgebras $W$ in $\mathfrak{g}((u))\oplus\mathfrak{g}$ with the required properties can be expressed using the data given in Remark \ref{rem1_A1}
and \ref{rem2_A1}.
\end{rem}
The remaining case to be discussed is $a(u)=1$. 
\begin{thm}\label{B2}
Suppose $a(u)=1$ and let $\alpha$ be a simple root and $k$ its coefficient in the decomposition of $\alpha_{\rm{max}}$. Then there exists a one-to-one correspondence between Lagrangian subalgebras $W$ of $\mathfrak{g}((u))\oplus\mathfrak{g}$, with respect to $Q_{a(u)}$, which are transversal to $\mathfrak{g}[[u]]$ and satisfy 
$W \subseteq (\mathbb{O}_{\alpha}\cap\mathfrak{g}[u,u^{-1}])\oplus\mathfrak{g}$
and Lagrangian subalgebras in $L_{\alpha}\oplus\mathfrak{g}$, with respect to $\bar{Q}$, transversal to $\Delta_{\alpha,\alpha_{\rm{max}}}$. 

\end{thm}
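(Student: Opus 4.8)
The plan is to follow the same strategy as in the proofs of Theorems \ref{A3} and \ref{B1}, namely to construct an explicit epimorphism from the relevant subquotient onto $L_\alpha\oplus\mathfrak g$ whose kernel is the perpendicular space, and to identify the image of $(\mathbb O_\alpha\cap\mathfrak g[u,u^{-1}])\oplus\mathfrak g$ intersected with $\mathfrak g[u]$ with $\Delta_{\alpha,\alpha_{\rm max}}$. First I would record, as in the proof of Theorem \ref{A3}, the decomposition $\mathfrak g=\sum_{r=-k}^{k}\mathfrak g_r$ according to the coefficient of $\alpha$, write out $\mathbb O_\alpha\cap\mathfrak g[u,u^{-1}]=\sum_{r=1}^{k}u^{-1}\mathfrak g_r[u^{-1}]+\sum_{r=1-k}^{0}\mathfrak g_r[u^{-1}]+u\mathfrak g_{-k}[u^{-1}]$, and compute the orthogonal complement of $(\mathbb O_\alpha\cap\mathfrak g[u,u^{-1}])\oplus\mathfrak g$ with respect to $Q_{a(u)}$ with $a(u)=1$. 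Since the bilinear form on $\mathfrak g((u))$ is now $\mathrm{Res}_{u=0}(u^{-1}K(f_1,f_2))$ (because $a(u)=1$ in case II) and the $\mathfrak g$-component contributes $-K(x_1,x_2)$, the complement should come out as the ideal-type space $u^{-1}\mathfrak g_k[u^{-1}]\oplus u^{-2}(\text{lower pieces})\oplus\cdots$ paired so that the $\mathfrak g$-factor is killed exactly by the ``diagonal at $u=0$'' condition; this is the bookkeeping analogue of the $(1-u)$-twist that appeared in Theorem \ref{A3}, with $(1-u)$ replaced by the relation coming from the extra summand $\mathfrak g$.

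Next I would define $\phi:(\mathbb O_\alpha\cap\mathfrak g[u,u^{-1}])\oplus\mathfrak g\to L_\alpha\oplus\mathfrak g$ by sending $(\sum_{r=1}^{k}u^{-1}p_r(u^{-1})+\sum_{r=1-k}^{0}p_r(u^{-1})+up_{-k}(u^{-1}),\,a)$ to $\bigl(p_k(0)+p_0(0)+p_{-k}(0),\,a\bigr)$, exploiting that $\mathfrak g_k+\mathfrak g_0+\mathfrak g_{-k}=L_\alpha$. The point where $a(u)=1$ differs from the case $a(u)=1/(1-u)$ is that the second component of the image is now governed by the independent $\mathfrak g$-summand rather than by evaluation at $u=1$; so the second coordinate of $\phi$ is simply the projection onto $\mathfrak g$. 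I would then check that $\phi$ is surjective (by the same interpolation-by-low-degree-polynomials argument as in Theorems \ref{A1}--\ref{A3}) and that $\ker\phi$ equals the orthogonal complement computed in the first step, so that $\phi$ descends to an isomorphism $\bar\phi$ of the subquotient onto $L_\alpha\oplus\mathfrak g$, which is an isometry between the induced form and $\bar Q$. Consequently Lagrangian subalgebras $W\subseteq(\mathbb O_\alpha\cap\mathfrak g[u,u^{-1}])\oplus\mathfrak g$ correspond bijectively to Lagrangian subalgebras $\bar W=\bar\phi(W)$ of $L_\alpha\oplus\mathfrak g$, and transversality of $W$ to $\mathfrak g[[u]]$ translates into transversality of $\bar W$ to $\bar\phi\bigl(((\mathbb O_\alpha\cap\mathfrak g[u,u^{-1}])\oplus\mathfrak g)\cap\mathfrak g[u]\bigr)$.

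It then remains to identify this last space. Since $(\mathbb O_\alpha\oplus\mathfrak g)\cap\mathfrak g[u]$ consists of pairs $\bigl(\sum_{r=1-k}^{0}a_r+u(a_{-k}+u^{-1}b_{-k}),\,x\bigr)$ with the compatibility forced by membership in a Lagrangian transversal object — concretely, $x$ must be the $u=0$ evaluation of the $\mathfrak g((u))$-component — its image under $\phi$ is the set of $\bigl(a_0+a_{-k},\,a_0+\sum_{r=-k}^{-1}a_r+b_{-k}\bigr)$ with $a_r\in\mathfrak g_r$, exactly as in the proof of Theorem \ref{A3}. Using $P_{\alpha_{\rm max}}^{+}=\mathfrak g_0+\mathfrak g_{-k}$ and $P_\alpha^{-}=\mathfrak g_0+\mathfrak g_{-1}+\cdots+\mathfrak g_{-k}$, this set is precisely $\Delta_{\alpha,\alpha_{\rm max}}$, the pairs in $P_{\alpha_{\rm max}}^{+}\times P_\alpha^{-}$ with equal reductive parts (both reductive parts being $\mathfrak g_0$). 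The main obstacle I anticipate is the first step: getting the orthogonal complement of $(\mathbb O_\alpha\cap\mathfrak g[u,u^{-1}])\oplus\mathfrak g$ under $Q_{a(u)}$ exactly right, keeping careful track of how the residue pairing with the $u^{-1}$ weight and the sign-twisted $\mathfrak g$-block interact, so that $\ker\phi$ matches it on the nose; once that is verified, everything else is the now-familiar interpolation-and-identification routine and no $k=1$ restriction is needed, which is why the statement holds for all $k$.
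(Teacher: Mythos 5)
Your proposal follows essentially the same route as the paper: the map sending $\bigl(\sum_{r=1}^{k}u^{-1}p_r(u^{-1})+\sum_{r=1-k}^{0}p_r(u^{-1})+up_{-k}(u^{-1}),\,a\bigr)$ to $\bigl(p_k(0)+p_0(0)+p_{-k}(0),\,a\bigr)$ is exactly the paper's $\bar\phi$, and the identification of the image of $(\mathbb O_\alpha\oplus\mathfrak g)\cap\mathfrak g[u]$ with $\Delta_{\alpha,\alpha_{\rm max}}$ is the same. The only detail you leave unfinished is the explicit orthogonal complement (your heuristic sketch has the powers on the $\mathfrak g_k$-piece slightly off; the correct answer, as in the paper, is $u^{-2}\mathfrak g_k[u^{-1}]+\sum_{r=0}^{k-1}u^{-1}\mathfrak g_r[u^{-1}]+\sum_{r=-k}^{-1}\mathfrak g_r[u^{-1}]$, with zero $\mathfrak g$-component), but since you correctly flag this as the step to verify and it does coincide with $\ker\phi$, the argument goes through as you describe.
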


\begin{proof}
 We will use the same notation as in the proof of Theorem \ref{A3}. We recall the following: 

\[\mathbb{O}_{\alpha}\cap\mathfrak{g}[u,u^{-1}]=\sum_{r=1}^k u^{-1}\mathfrak{g}_r[u^{-1}]+
\sum_{r=1-k}^0 \mathfrak{g}_r[u^{-1}]+u\mathfrak{g}_{-k}[u^{-1}],\] 
\[(\mathbb{O}_{\alpha}\cap\mathfrak{g}[u,u^{-1}]\oplus\mathfrak{g})^{\perp}=
\sum_{-k}^{-1}\mathfrak{g}_r[[u^{-1}]]+\sum_{r=0}^{k-1}u^{-1}\mathfrak{g}_r[[u^{-1}]]+u^{-2}\mathfrak{g}_k.\]
Then we have an isomorphism 
\[\bar{\phi}: \frac{\mathbb{O}_{\alpha}\cap\mathfrak{g}[u,u^{-1}]\oplus\mathfrak{g}}{(\mathbb{O}_{\alpha}\cap\mathfrak{g}[u,u^{-1}]\oplus\mathfrak{g})^{\perp}}
\longrightarrow (\mathfrak{g}_k+\mathfrak{g}_0+\mathfrak{g}_{-k})\oplus\mathfrak{g}\] 
such that for the equivalence class of a pair $(f,x)\in\mathbb{O}_{\alpha}\cap\mathfrak{g}[u,u^{-1}]\oplus\mathfrak{g}$, $\bar{\phi}(f,x)=(a_0+b_0+c_0,x)$, 
if $f=u^{-1}(a_0+a_1u^{-1}+...)+(b_0+b_1u^{-1}+...)+u(c_0+c_1u^{-1}+...)+...$,
$a_i\in\mathfrak{g}_k$, $b_i\in\mathfrak{g}_0$, $c_i\in\mathfrak{g}_{-k}$, $x\in\mathfrak{g}$. 
The image of $(\mathbb{O}_{\alpha}\oplus\mathfrak{g})\cap\mathfrak{g}[u]$ is eaxctly $\Delta_{\alpha,\alpha_{\rm{max}}}$. 

\end{proof}

\begin{rem}
With a different formulation, this result also appeared in \cite{PS}, where the so-called quasi-trigonometric solutions of the CYBE were classified.
\end{rem}
\begin{rem}
We also note the analogy between this theorem and Theorem \ref{A3}. The corresponding Lagrangian subalgebras can be described using the data given in Remark \ref{generalized BD}.
\end{rem}

\section{Lie bialgebra structures on  $\mathfrak{g}[u]$ in case III}
Finally, let us treat case III, where the associated double is $\mathfrak{g}((u))\oplus \mathfrak{g}[\varepsilon]$, 
with the nondegenerate bilinear form given by the formula

\[Q_{a(u)}(f_1(u)+x_2+\varepsilon x_3,f_2(u)+y_2+\varepsilon y_3)=\mathrm{Res}_{u=0}(u^{-2}a(u)K(f_1(u),f_2(u))-\]
\[-K(x_3,y_2)-K(x_2,y_3),\]
for any $f_1(u),f_2(u)\in\mathfrak{g}((u))$ and $x_2,x_3,y_2,y_3\in\mathfrak{g}$. According to \cite{SZ}, the following result holds:
\begin{prop}
There exists a one-to-one correspondence between Lie bialgebra structures 
$\delta$ on $\mathfrak{g}[u]$ satisfying  
$D_{\bar{\delta}}(\mathfrak{g}[[u]])=\mathfrak{g}((u))\oplus \mathfrak{g}[\varepsilon]$ and 
bounded Lagrangian subalgebras $W$ of $\mathfrak{g}((u))\oplus \mathfrak{g}[\varepsilon]$, with respect to 
the nondegenerate bilinear form $Q_{a(u)}$, and transversal to $\mathfrak{g}[[u]]$.
\end{prop}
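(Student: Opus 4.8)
The plan is to derive this as an instance of Drinfeld's correspondence between Lie bialgebra structures and Manin triples, executed in the topological category of formal power series. Write $\mathfrak{a}=\mathfrak{g}[[u]]$ and $D=\mathfrak{g}((u))\oplus\mathfrak{g}[\varepsilon]$ equipped with the form $Q_{a(u)}$ (in Case III one has $a(u)=1$). The first step is to realize $\mathfrak{a}$ as a Lagrangian subalgebra of $D$ via the $1$-jet embedding $\iota(f)=(f,\,f(0)+\varepsilon f'(0))$. A direct check shows $\iota$ is a Lie algebra homomorphism, since $[f,g](0)=[f(0),g(0)]$ and $[f,g]'(0)=[f(0),g'(0)]+[f'(0),g(0)]$ reproduce exactly the bracket of $\mathfrak{g}[\varepsilon]$. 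Isotropy is the key residue computation: for $f,g\in\mathfrak{a}$ one has $\mathrm{Res}_{u=0}(u^{-2}K(f,g))=K(f(0),g'(0))+K(f'(0),g(0))$, and this is cancelled precisely by the two terms $-K(f'(0),g(0))-K(f(0),g'(0))$ contributed by the $\mathfrak{g}$-pairings in $Q_{a(u)}$. Maximality of $\mathfrak{a}$ among isotropic subspaces then follows from the nondegeneracy of $Q_{a(u)}$ and a codimension count, so $\mathfrak{a}$ is Lagrangian.

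With $\mathfrak{a}$ so embedded, the forward direction runs as follows. Given a Lie bialgebra structure $\delta$ on $\mathfrak{g}[u]$ whose extension $\bar\delta$ has double $D_{\bar\delta}(\mathfrak{g}[[u]])=D$, the cobracket $\bar\delta$ equips the restricted dual $\mathfrak{a}^*$ with a Lie algebra structure, and the canonical Manin-triple construction of the double places $\mathfrak{a}^*$ inside $D$ as a subalgebra complementary to $\mathfrak{a}$. I would set $W$ to be the image of $\mathfrak{a}^*$. Because the canonical pairing of the double is identified with $Q_{a(u)}$, the subalgebra $W$ is automatically Lagrangian and satisfies $D=\mathfrak{a}\oplus W$, i.e. $W$ is transversal to $\mathfrak{g}[[u]]$. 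The boundedness of $W$ is the translation of the requirement that $\bar\delta$ be continuous, equivalently that $\delta$ be a genuine polynomial cobracket on $\mathfrak{g}[u]$.

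Conversely, starting from a bounded Lagrangian $W$ transversal to $\mathfrak{a}$, the vector space decomposition $D=\mathfrak{a}\oplus W$ together with the invariance and nondegeneracy of $Q_{a(u)}$ identifies $W$ with $\mathfrak{a}^*$ as a Lie algebra. Dualizing the bracket of $W$ then produces a map $\bar\delta\colon\mathfrak{a}\to\mathfrak{a}\,\widehat{\otimes}\,\mathfrak{a}$ which, by the standard Manin-triple bookkeeping, satisfies co-Jacobi and the cocycle condition and has $D$ as its double. Here boundedness of $W$ is exactly the finiteness ensuring that $\bar\delta$ lands in the completed tensor square and restricts to a cobracket $\delta$ carrying $\mathfrak{g}[u]$ into $\mathfrak{g}[u]\otimes\mathfrak{g}[u]$. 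The two assignments $\delta\mapsto W$ and $W\mapsto\delta$ are mutually inverse once the chain of identifications $W\leftrightarrow\mathfrak{a}^*\leftrightarrow\bar\delta$ has been fixed, so this final verification is formal.

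The main obstacle is analytic rather than algebraic. One must make the duality between $W$ and the dual $\mathfrak{a}^*$ precise in the restricted (continuous) sense, since the naive full dual of $\mathfrak{g}[[u]]$ is too large, and one must verify that boundedness is precisely the condition guaranteeing both that $\bar\delta$ is well-defined on the completed tensor product and that it descends to the polynomial subalgebra $\mathfrak{g}[u]$. Once this topological bookkeeping is in place, the isotropy computation and the Manin-triple dualization above supply the remaining content.
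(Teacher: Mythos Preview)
The paper does not actually prove this proposition; it is stated with the preface ``According to \cite{SZ}, the following result holds'' and is imported wholesale from Montaner--Stolin--Zelmanov. So there is no in-paper argument to compare against.

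That said, your outline is the correct one and is essentially what \cite{SZ} does: realize $\mathfrak{g}[[u]]$ inside $D$ via the $1$-jet map, verify isotropy by the residue computation you wrote down (which is correct), and then invoke the Manin-triple dictionary. Your identification of the embedding $\iota(f)=(f,f(0)+\varepsilon f'(0))$ and the cancellation $\mathrm{Res}_{u=0}(u^{-2}K(f,g))=K(f(0),g'(0))+K(f'(0),g(0))$ against the $\mathfrak{g}[\varepsilon]$-terms is exactly the point.

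Where your write-up is honest but incomplete is the final paragraph: the ``topological bookkeeping'' you flag is not a side issue but the actual content of the proposition. The naive Manin-triple argument works for finite-dimensional Lie algebras; here one must specify which dual of $\mathfrak{g}[[u]]$ is meant, prove that boundedness of $W$ is equivalent to $\bar\delta$ landing in the right completed tensor product, and check that $\bar\delta$ restricts to a polynomial cobracket on $\mathfrak{g}[u]$. You have named these steps but not carried them out, so as written this is a correct proof plan rather than a proof. A reader would need to consult \cite{SZ} to see these verifications done, which is precisely what the paper itself does.
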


Recall that any $\sigma(u)\in \mathrm{Ad}(\mathfrak{g}[u])$  induces an automorphism $\sigma(0)\in\mathrm{Ad}(\mathfrak{g})$, which in turn gives an well-defined automorphism $\bar{\sigma}(0)$ of $\mathfrak{g}[\varepsilon]$ via $\bar{\sigma}(0)(x+\varepsilon y)=\sigma(0)(x)+\varepsilon\sigma(0)(y)$. Then $\tilde{\sigma}(u)=\sigma(u)\oplus \bar{\sigma}(0)$ is an automorphism of $\mathfrak{g}((u))\oplus\mathfrak{g}[\varepsilon]$. 

\begin{prop}\label{max_ord3}\cite{PM}
Suppose that $W$ is a bounded Lagrangian subalgebra of $\mathfrak{g}((u))\oplus\mathfrak{g}[\varepsilon]$, with respect to $Q_{a(u)}$ and transversal to $\mathfrak{g}[[u]]$. 
Then there exists $\sigma\in\mathrm{Ad}_{\mathbb{C}[u]}(\mathfrak{g}[u])$ such that $\tilde{\sigma}(u)(W)\subseteq (\mathbb{O}_{\alpha}\cap \mathfrak{g}[u,u^{-1}])\oplus\mathfrak{g}[\varepsilon]$, where $\alpha$ is either a simple root or $-\alpha_{\max}$. 

\end{prop}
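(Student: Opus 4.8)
The plan is to mirror the maximal-order embedding arguments of Propositions \ref{max_ord} and \ref{max_ord2}, reducing the problem on the double $\mathfrak{g}((u))\oplus\mathfrak{g}[\varepsilon]$ to the already understood problem on the single factor $\mathfrak{g}((u))$, and then to carry the auxiliary summand $\mathfrak{g}[\varepsilon]$ along by means of a compatibly lifted conjugation. Let $\pi:\mathfrak{g}((u))\oplus\mathfrak{g}[\varepsilon]\to\mathfrak{g}((u))$ denote the projection onto the first factor and set $W_{0}:=\pi(W)$. Since $\mathfrak{g}((u))\oplus\mathfrak{g}[\varepsilon]$ is a direct sum of Lie algebras, $\pi$ is a Lie algebra homomorphism with kernel $0\oplus\mathfrak{g}[\varepsilon]$, so $W_{0}$ is a Lie subalgebra of $\mathfrak{g}((u))$; because the second factor is finite dimensional, the boundedness of $W$ transfers to boundedness of $W_{0}$. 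Moreover one has automatically $W\subseteq\pi^{-1}(W_{0})=W_{0}\oplus\mathfrak{g}[\varepsilon]$.

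First I would apply to $W_{0}$ the theory of maximal orders underlying Proposition \ref{max_ord}: every bounded subalgebra of $\mathfrak{g}((u))$ is contained in a maximal order, and up to the action of $\mathrm{Ad}_{\mathbb{C}[u]}(\mathfrak{g}[u])$ every maximal order is one of the standard orders $\mathbb{O}_{\alpha}$ with $\alpha$ a simple root or $-\alpha_{\max}$. This produces $\sigma\in\mathrm{Ad}_{\mathbb{C}[u]}(\mathfrak{g}[u])$ and such an $\alpha$ with $\sigma(u)(W_{0})\subseteq\mathbb{O}_{\alpha}$. Lifting $\sigma$ to $\tilde\sigma(u)=\sigma(u)\oplus\bar\sigma(0)$ and using $W\subseteq W_{0}\oplus\mathfrak{g}[\varepsilon]$, one obtains at once
\[\tilde\sigma(u)(W)\subseteq\sigma(u)(W_{0})\oplus\bar\sigma(0)(\mathfrak{g}[\varepsilon])=\sigma(u)(W_{0})\oplus\mathfrak{g}[\varepsilon]\subseteq\mathbb{O}_{\alpha}\oplus\mathfrak{g}[\varepsilon].\]

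It then remains to (a) see that $\tilde\sigma(u)(W)$ is again a Lagrangian subalgebra transversal to $\mathfrak{g}[[u]]$, and (b) upgrade the containment in $\mathbb{O}_{\alpha}\oplus\mathfrak{g}[\varepsilon]$ to containment in $(\mathbb{O}_{\alpha}\cap\mathfrak{g}[u,u^{-1}])\oplus\mathfrak{g}[\varepsilon]$. For (b), since $\tilde\sigma(u)(W)$ is Lagrangian and contained in $\mathbb{O}_{\alpha}\oplus\mathfrak{g}[\varepsilon]$, passing to orthogonals gives the lower bound $\tilde\sigma(u)(W)=\tilde\sigma(u)(W)^{\perp}\supseteq(\mathbb{O}_{\alpha}\oplus\mathfrak{g}[\varepsilon])^{\perp}$; combining this with transversality to $\mathfrak{g}[[u]]$ forces the first components of elements of $\tilde\sigma(u)(W)$ to have finite principal parts in $u^{-1}$, that is, to be Laurent polynomials, exactly as in the single-factor computation of Proposition \ref{max_ord}.

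The main obstacle I anticipate lies in step (a): one must verify that the lifted transformation $\tilde\sigma(u)=\sigma(u)\oplus\bar\sigma(0)$ is genuinely an automorphism of the classical double respecting all of its structure, namely that it is an isometry for $Q_{a(u)}$ and, crucially, that it carries the distinguished copy of $\mathfrak{g}[[u]]$ inside $\mathfrak{g}((u))\oplus\mathfrak{g}[\varepsilon]$ onto itself. The isometry property is immediate from the innerness of $\sigma$, since then $K(\sigma f_{1},\sigma f_{2})=K(f_{1},f_{2})$ preserves the residue pairing on the first factor while $\bar\sigma(0)$ preserves the pairing $-K(x_{3},y_{2})-K(x_{2},y_{3})$ on $\mathfrak{g}[\varepsilon]$. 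The compatibility with the embedded $\mathfrak{g}[[u]]$, however, is exactly the point where the interaction between the $\mathfrak{g}((u))$ factor and the infinitesimal factor $\mathfrak{g}[\varepsilon]$ must be controlled, and it is what dictates the use of $\mathrm{Ad}_{\mathbb{C}[u]}(\mathfrak{g}[u])$ together with the specific definition of $\bar\sigma(0)$. Once this compatibility is secured, the Lagrangian and transversality properties descend automatically, and the Laurent-polynomial refinement in (b) becomes routine, being formally identical to the corresponding step already carried out for $\mathbb{O}_{\alpha}$ in $\mathfrak{g}((u))$ alone.
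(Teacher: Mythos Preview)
The paper does not supply its own proof of this proposition: it is stated with a citation to \cite{PM} and no argument is given. So there is no in-paper proof to compare your proposal against directly.

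That said, your outline follows the natural strategy one would expect from \cite{PM}, namely projecting to the $\mathfrak{g}((u))$ factor, invoking the maximal-order theory there, and then lifting back. The part you flag as ``the main obstacle'' is indeed the crux, and as written you do not resolve it. Concretely: the diagonal copy of $\mathfrak{g}[[u]]$ inside $\mathfrak{g}((u))\oplus\mathfrak{g}[\varepsilon]$ must be the Lagrangian embedding $f\mapsto\bigl(f,\,f(0)+\varepsilon f'(0)\bigr)$ (this is forced by isotropy with respect to $Q_{a(u)}$ when $a(u)=1$). With the paper's definition $\bar\sigma(0)(x+\varepsilon y)=\sigma(0)x+\varepsilon\,\sigma(0)y$, one computes that $\tilde\sigma(u)$ sends $\bigl(f,\,f(0)+\varepsilon f'(0)\bigr)$ to $\bigl(\sigma(u)f,\,\sigma(0)f(0)+\varepsilon\,\sigma(0)f'(0)\bigr)$, whereas the embedded image of $\sigma(u)f$ is $\bigl(\sigma(u)f,\,\sigma(0)f(0)+\varepsilon(\sigma'(0)f(0)+\sigma(0)f'(0))\bigr)$. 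These differ by the term $\varepsilon\,\sigma'(0)f(0)$, so $\tilde\sigma(u)$ does \emph{not} preserve the embedded $\mathfrak{g}[[u]]$ in general. Hence transversality of $\tilde\sigma(u)(W)$ to $\mathfrak{g}[[u]]$ is not automatic, and your step~(b), which invokes that transversality, is left unsupported.

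This is repairable. One option is to note that $\tilde\sigma(u)(\mathfrak{g}[[u]])$ and $\mathfrak{g}[[u]]$ differ only inside the finite-dimensional piece $0\oplus\varepsilon\mathfrak{g}$, so the Laurent-polynomial refinement on the $\mathfrak{g}((u))$ component can be carried out without appealing to transversality in the double at all (it depends only on $\sigma(u)(W_0)\subseteq\mathbb{O}_\alpha$ together with the original transversality $W_0+\mathfrak{g}[[u]]=\mathfrak{g}((u))$, which does survive conjugation by $\sigma(u)$ since $\sigma(u)$ preserves $\mathfrak{g}[[u]]\subset\mathfrak{g}((u))$). Another option is to replace $\bar\sigma(0)$ by the first-order jet $\sigma(0)+\varepsilon\,\sigma'(0)$ acting on $\mathfrak{g}[\varepsilon]$, which does make $\tilde\sigma$ preserve the embedded $\mathfrak{g}[[u]]$; but that is not the lift the paper writes down, so you would have to argue separately that it is still an isometry of $Q_{a(u)}$.
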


The case $-\alpha_{\max}$ has already been analysed in \cite{PM}. For an arbitrary simple root we have: 
\begin{thm}\label{C1}
Suppose $a(u)=1$ and let $\alpha$ be a simple root and $k$ its coefficient in the decomposition of $\alpha_{\rm{max}}$. 

(i) If $k=1$, there exists a one-to-one correspondence between Lagrangian subalgebras $W$ of $\mathfrak{g}((u))\oplus\mathfrak{g}[\varepsilon]$, with respect to $Q_{a(u)}$, which are transversal to $\mathfrak{g}[[u]]$ and satisfy 
$W \subseteq (\mathbb{O}_{\alpha}\cap\mathfrak{g}[u,u^{-1}])\oplus\mathfrak{g}[\varepsilon]$
and Lagrangian subalgebras in $\mathfrak{g}[\varepsilon]$, with respect to $\bar{Q}_{\varepsilon}$, transversal to $P_{\alpha}^{-}+\varepsilon (P_{\alpha}^{-})^{\perp}$. 

(ii)  If $k>1$ there are no Lagrangian subalgebras $W$ of $\mathfrak{g}((u))\oplus\mathfrak{g}[\varepsilon]$, with respect to $Q_{a(u)}$, which are transversal to $\mathfrak{g}[[u]]$ and satisfy 
$W \subseteq (\mathbb{O}_{\alpha}\cap\mathfrak{g}[u,u^{-1}])\oplus\mathfrak{g}[\varepsilon]$. 
\end{thm}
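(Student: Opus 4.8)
The plan is to follow the same strategy used in Theorems \ref{A1}, \ref{A2}, \ref{A3}, \ref{B1} and \ref{B2}: reduce the infinite-dimensional problem to a finite-dimensional one by quotienting $(\mathbb{O}_{\alpha}\cap\mathfrak{g}[u,u^{-1}])\oplus\mathfrak{g}[\varepsilon]$ by its orthogonal complement with respect to $Q_{a(u)}$ (with $a(u)=1$), and then identifying the resulting quotient, together with the image of the transversal subalgebra $((\mathbb{O}_\alpha\cap\mathfrak{g}[u,u^{-1}])\oplus\mathfrak{g}[\varepsilon])\cap\mathfrak{g}[u]$, with a concrete finite-dimensional datum. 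First I would treat part (ii). Since the double in case III is ``larger'' than in cases I and II (it involves $\mathfrak{g}[\varepsilon]$ rather than $\mathfrak{g}$), the same dimension/valuation obstruction that forces non-existence when $k>1$ in Theorems \ref{A2}(ii) and \ref{B1}(ii) should apply here; indeed this was essentially handled in \cite{SZ}, so I would simply invoke that, exactly as the proofs of Theorems \ref{A1}--\ref{B1} do for their own part (ii).

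For part (i), assume $k=1$, so that $\mathbb{O}_\alpha\cap\mathfrak{g}[u,u^{-1}]=u^{-1}\mathfrak{g}_1[u^{-1}]+\mathfrak{g}_0[u^{-1}]+u\mathfrak{g}_{-1}[u^{-1}]$ as in the previous theorems. The key computation is to determine $((\mathbb{O}_\alpha\cap\mathfrak{g}[u,u^{-1}])\oplus\mathfrak{g}[\varepsilon])^{\perp}$ with respect to the form $Q_{a(u)}(f_1+x_2+\varepsilon x_3,f_2+y_2+\varepsilon y_3)=\mathrm{Res}_{u=0}(u^{-2}K(f_1,f_2))-K(x_3,y_2)-K(x_2,y_3)$. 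Because the residue now carries a factor $u^{-2}$ (rather than $u^{-1}$ as in case II or $1$ as in case I), and because the $\mathfrak{g}[\varepsilon]$-component pairs $x_2$ against $y_3$ and $x_3$ against $y_2$ via the Killing form with no $u$-dependence, the orthogonal complement will be generated by $(u^{-1}-1)^2$ acting on the relevant pieces together with a coupling to the $\varepsilon$-direction — I expect it to look like the ideal $(u^{-1}-1)^2(\dots)$ twisted so that the evaluation-at-$1$ and derivative-at-$1$ data of $f$ get matched against $x_2,x_3$. Then I would define $\phi$ from $(\mathbb{O}_\alpha\cap\mathfrak{g}[u,u^{-1}])\oplus\mathfrak{g}[\varepsilon]$ to $\mathfrak{g}[\varepsilon]$ by sending $u^{-1}p_1(u^{-1})+p_0(u^{-1})+up_{-1}(u^{-1})$, together with the $\mathfrak{g}[\varepsilon]$-summand, to an appropriate combination of $(1+\varepsilon)p_1(1+\varepsilon)+p_0(1+\varepsilon)+(1-\varepsilon)p_{-1}(1+\varepsilon)$ and the $\mathfrak{g}[\varepsilon]$-input (as in the proof of Theorem \ref{A2}, but now with the extra summand absorbed), check surjectivity by solving for first-degree $p_1,p_0,p_{-1}$, and check that the kernel is exactly the orthogonal complement just computed. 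This gives the isomorphism $\bar\phi$ of the quotient with $\mathfrak{g}[\varepsilon]$, hence the $1$-$1$ correspondence between Lagrangian $W$ and Lagrangian subalgebras of $\mathfrak{g}[\varepsilon]$.

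Finally I would verify that $\bar\phi$ carries $((\mathbb{O}_\alpha\cap\mathfrak{g}[u,u^{-1}])\oplus\mathfrak{g}[\varepsilon])\cap\mathfrak{g}[u]$ onto $P_\alpha^{-}+\varepsilon(P_\alpha^{-})^{\perp}$. Since $\mathbb{O}_\alpha\cap\mathfrak{g}[u]=\mathfrak{g}_0+u(\mathfrak{g}_{-1}+u^{-1}\mathfrak{g}_{-1})$ exactly as in Theorem \ref{A2}, and since $\phi$ restricted to the purely $\mathfrak{g}((u))$-part reproduces the computation there yielding elements $a_0+(1-\varepsilon)b_{-1}+c_{-1}$, i.e. $P_\alpha^{-}+\varepsilon(P_\alpha^{-})^{\perp}$ (using $P_\alpha^{-}=\mathfrak{g}_0+\mathfrak{g}_{-1}$ and that $\varepsilon\mathfrak{g}_{-1}=\varepsilon(P_\alpha^-)^\perp$), the only new point is that the extra $\mathfrak{g}[\varepsilon]$-summand of the double intersects $\mathfrak{g}[u]$ trivially (an element of $\mathfrak{g}[u]\subseteq\mathfrak{g}((u))$ has no $\mathfrak{g}[\varepsilon]$-component), so it contributes nothing. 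The main obstacle I anticipate is getting the precise form of the orthogonal complement right: the $u^{-2}$-weighted residue together with the off-diagonal $\varepsilon$-pairing means one must be careful about how the evaluation data of $f$ at $u^{-1}=1$ (value and first derivative) couple to both the $x_2$ and $x_3$ slots, and correspondingly how the definition of $\phi$ must be normalized so that exactly the right combination lands in the kernel; once that bookkeeping is done, the rest is the same routine linear algebra as in Theorems \ref{A2} and \ref{C1}(i) for $a(u)=1$, so I would present it tersely and refer back to those proofs.
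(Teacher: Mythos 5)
Your high-level strategy (quotient by the orthogonal complement, identify the quotient with $\mathfrak{g}[\varepsilon]$, compute the image of the transversal subalgebra) is the paper's strategy, and your treatment of (ii) by citing \cite{SZ} matches the paper. But the key computation in (i) is anticipated incorrectly, and the error propagates. With $a(u)=1$ the pairing on the $\mathfrak{g}((u))$-part is $\mathrm{Res}_{u=0}(u^{-2}K(f_1,f_2))$, and for $f_1,f_2\in\mathbb{O}_{\alpha}\cap\mathfrak{g}[u,u^{-1}]$ one has $K(f_1,f_2)\in\mathbb{C}[u^{-1}]$ (the $u^{\pm1}$ prefactors cancel because $\mathfrak{g}_{1}$ pairs only with $\mathfrak{g}_{-1}$), so $u^{-2}K(f_1,f_2)$ has no $u^{-1}$-term and the residue vanishes identically. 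Hence $((\mathbb{O}_{\alpha}\cap\mathfrak{g}[u,u^{-1}])\oplus\mathfrak{g}[\varepsilon])^{\perp}$ is \emph{all} of $\mathbb{O}_{\alpha}\cap\mathfrak{g}[u,u^{-1}]$ (paired with $0$), not an ideal of the form $(u^{-1}-1)^2(\cdots)$; there is no pole of $a(u)$ at $u=1$ here, so nothing is "evaluated at $u^{-1}=1+\varepsilon$" as in Theorem \ref{A2}. The quotient map is simply the projection onto the $\mathfrak{g}[\varepsilon]$-summand, and the map $\phi$ you propose to build from $(1+\varepsilon)p_1(1+\varepsilon)+p_0(1+\varepsilon)+(1-\varepsilon)p_{-1}(1+\varepsilon)$ is the wrong map: its kernel is not the orthogonal complement just computed.

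The second genuine error is your claim that "an element of $\mathfrak{g}[u]\subseteq\mathfrak{g}((u))$ has no $\mathfrak{g}[\varepsilon]$-component, so it contributes nothing." In case III the embedding of $\mathfrak{g}[u]$ into the double $\mathfrak{g}((u))\oplus\mathfrak{g}[\varepsilon]$ is $f\mapsto(f,\,f(0)+\varepsilon f'(0))$ (only then is it isotropic, since $\mathrm{Res}_{u=0}(u^{-2}K(f,g))=K(f'(0),g(0))+K(f(0),g'(0))$). Since the whole $\mathfrak{g}((u))$-factor dies in the quotient, the image of $(\mathbb{O}_{\alpha}\oplus\mathfrak{g}[\varepsilon])\cap\mathfrak{g}[u]$ comes \emph{entirely} from this $\mathfrak{g}[\varepsilon]$-component: $f=a_0+b_{-1}+uc_{-1}\mapsto a_0+b_{-1}+\varepsilon c_{-1}$, giving $P_{\alpha}^{-}+\varepsilon(P_{\alpha}^{-})^{\perp}$. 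You land on the correct final set, but by a mechanism (evaluation of the Laurent part at $u^{-1}=1+\varepsilon$) that does not compute the quotient for this form; as written, the argument would not close up without replacing both the orthogonal-complement computation and the description of the embedding of $\mathfrak{g}[u]$.
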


\begin{proof} 
(ii) was proved in \cite{SZ}. (i) If $k=1$,  
 $\mathbb{O}_{\alpha}\cap\mathfrak{g}[u,u^{-1}]=u^{-1}\mathfrak{g}_1[u^{-1}]+\mathfrak{g}_0[u^{-1}]+
u\mathfrak{g}_{-1}[u^{-1}]$ and 
$(\mathbb{O}_{\alpha}\cap\mathfrak{g}[u,u^{-1}]\oplus\mathfrak{g}[\varepsilon])^{\perp}=\mathbb{O}_{\alpha}\cap\mathfrak{g}[u,u^{-1}]$. Then $\frac{\mathbb{O}_{\alpha}\cap\mathfrak{g}[u,u^{-1}]\oplus\mathfrak{g}[\varepsilon]}{(\mathbb{O}_{\alpha}\cap\mathfrak{g}[u,u^{-1}]\oplus\mathfrak{g}[\varepsilon])^{\perp}}$ is obviously isomorphic to $\mathfrak{g}[\varepsilon]$. The image of $(\mathbb{O}_{\alpha}\oplus\mathfrak{g}[\varepsilon])\cap \mathfrak{g}[u]$
via this isomorphism is $P_{\alpha}^{-}+\varepsilon (P_{\alpha}^{-})^{\perp}$. The conclusion follows immediately.
\end{proof}

\begin{rem}
This result was also obtained in \cite{SY}, where quasi-rational $r$-matrices were studied. 
\end{rem}
\begin{rem}
We observe that the above theorem is analogous to Theorem \ref{A2} and Remark \ref{rem_A2} also holds. 
\end{rem}
\bibliographystyle{amsalpha}

\end{document}